\newtheorem{theorem}			     {Theorem}	    [section]
\newtheorem{proposition}  [theorem]	 {Proposition}
\newtheorem{corollary}	  [theorem]	 {Corollary}
\theoremstyle{definition}
\newtheorem{remark} 	  [theorem]  {Remark}
\newtheorem{openquestion}	  [theorem]  {Open question}
\newcommand{\CC}{\mathbb{C}}
\newcommand{\DD}{\mathbb{D}}
\newcommand{\e}{\mathrm{e}}
\newcommand{\F}{\mathrm{F}}
\newcommand{\JJ}{\mathbb{J}}
\newcommand{\M}{\mathrm{M}}
\newcommand{\Mc}{\mathcal{M}}
\newcommand{\N}{\mathrm{N}}
\renewcommand{\P}{\mathrm{P}}
\newcommand{\Q}{\mathrm{Q}}
\newcommand{\T}{\mathrm{T}}
\newcommand{\U}{\mathrm{U}}
\newcommand{\VV}{\mathbb{V}}
\newcommand{\Y}{\mathrm{Y}}
\newcommand{\matr}{\mathrm{matr}}
\newcommand{\mclex}{\mathrm{mclex}}
\newcommand{\all}{\mathrm{all}}
\newcommand{\strong}{\mathrm{strong}}
\newcommand{\lex}{\mathrm{lex}}
\newcommand{\reg}{\mathrm{reg}}
\newcommand{\essalg}{\mathrm{ess\,\, alg}}
\newcommand{\Mal}{\mathrm{Mal}}
\newcommand{\Maj}{\mathrm{Maj}}
\newcommand{\Ari}{\mathrm{Ari}}
\newcommand{\Uni}{\mathrm{Uni}}
\newcommand{\StrUni}{\mathrm{StrUni}}
\newcommand{\Sub}{\mathrm{Sub}}
\newcommand{\Cube}{\mathrm{Cube}}
\newcommand{\Edge}{\mathrm{Edge}}
\newcommand{\Set}{\mathsf{Set}}
\newcommand{\Part}{\mathsf{Part}}
\newcommand{\Bool}{\mathsf{Bool}}
\renewcommand{\1}{\mathsf{1}}
\newcommand{\op}{\mathrm{op}}
\newcommand{\pb}[1][dr]{\save*!/#1-1.5pc/#1:(-1,1)@^{|-}\restore}
\title{Partial algebras and implications of (weak) matrix properties}
\author[a,d,e]{Michael Hoefnagel}
\author[b,c,d]{Pierre-Alain Jacqmin\thanks{The second
author is grateful to the FNRS for its generous support.}}
\affil[a]{\small{\textit{Mathematics Division, Department of Mathematical Sciences, Stellenbosch University, Private Bag X1 Matieland 7602, South Africa}}}
\affil[b]{\small{\textit{Institut de Recherche en Math\'ematique et Physique, Universit\'e catholique de Louvain, Chemin du Cyclotron~2, B 1348 Louvain-la-Neuve, Belgium}}}
\affil[c]{\small{\textit{Department of Mathematics, Royal Military Academy, Brussels, Belgium\vspace{5pt}}}}
\affil[d]{\small{\textit{Centre for Experimental Mathematics, Department of Mathematical Sciences, Stellenbosch University, Private Bag X1 Matieland 7602, South Africa}}}
\affil[e]{\small{\textit{National Institute for Theoretical and Computational Sciences (NITheCS), South Africa}}}
\date{}
\begin{document}

\maketitle

\begin{abstract}
Matrix properties are a type of property of categories which includes the ones of being Mal'tsev, arithmetical, majority, unital, strongly unital and subtractive. Recently, an algorithm has been developed to determine implications $\M\Rightarrow_{\lex_\ast}\N$ between them. We show here that this algorithm reduces to construct a partial term corresponding to $\N$ from a partial term corresponding to~$\M$. Moreover, we prove that this is further equivalent to the corresponding implication between the weak versions of these properties, i.e., the one where only strong monomorphisms are considered instead of all monomorphisms.
\end{abstract}

{\small\textit{2020 Mathematics Subject Classification}: 03B35, 18E13, 08A55, 08B05, 18-08, 18B15, 18A20.}

{\small\textit{Keywords}: matrix property, proof reduction, (weakly) Mal'tsev category, arithmetical category, majority category, (weakly) unital category.}

\section*{Introduction}

A pointed variety $\VV$ of universal algebras in said to be  \emph{subtractive} in the sense of~\cite{Ursini1994} if its theory contains a binary term $x-y$ (called a \emph{subtraction}) satisfying the equations
\begin{equation}\label{equation subtraction}
\begin{cases}x-0=x\\ x-x=0\end{cases}
\end{equation}
where $0$ is the unique constant in the theory of~$\VV$. From such a term, one can construct a ternary term $p(x,y,z)$ satisfying the equations
\begin{equation}\label{equation term p intro}
\begin{cases}p(x,x,x)=x\\ p(x,x,0)=0\\ p(0,x,x)=0\end{cases}
\end{equation}
by setting
$$p(x,y,z)=x-(y-z).$$
Indeed, one can check that
\begin{equation}\label{equations p from -}
\begin{cases}p(x,x,x)=x-(x-x)=x-0=x\\p(x,x,0)=x-(x-0)=x-x=0\\p(0,x,x)=0-(x-x)=0-0=0.\end{cases}
\end{equation}
In addition, from a subtraction $x-y$, one can also construct a $4$-ary term $q(x,y,z,w)$ satisfying the equations
\begin{equation}\label{equation term q intro}
\begin{cases}q(x,0,0,0)=x\\q(x,x,y,y)=0\\q(x,y,x,y)=0\end{cases}
\end{equation}
by setting
$$q(x,y,z,w)=(x-y)-(z-w).$$
To see this, it suffices to compute
\begin{equation}\label{equations q from -}
\begin{cases}q(x,0,0,0)=(x-0)-(0-0)=x-0=x\\q(x,x,y,y)=(x-x)-(y-y)=0-0=0\\q(x,y,x,y)=(x-y)-(x-y)=0.\end{cases}
\end{equation}
Although these two proofs look very similar, let us now explain a fundamental difference between them which has deep consequences in Category Theory.

The first proof (the construction of the term $p(x,y,z)$ from the subtraction $x-y$) can be extended to the context of partial algebras in the following sense, while the second cannot. Let us consider a pointed set $(A,0)$ equipped with a \emph{partial subtraction}, i.e., a partial function $-\colon A^2\dashrightarrow A$ such that the existential equations
$$\begin{cases}x-0=^\e x\\ x-x=^\e 0\end{cases}$$
hold, meaning that, for every $a\in A$, both $a-0$ and $a-a$ are defined and they are equal to $a-0=a$ and $a-a=0$ respectively. For such a \emph{partial subtractive algebra}~$A$, one can construct the ternary partial operation $p\colon A^3\dashrightarrow A$ by setting $p(x,y,z)=x-(y-z)$ for $x,y,z\in A$ for which $y-z$ and $x-(y-z)$ are defined (and $p(x,y,z)$ is not defined for the other triples $(x,y,z)\in A^3$). This partial operation satisfies the existential equations
$$\begin{cases}p(x,x,x)=^\e x\\ p(x,x,0)=^\e 0\\ p(0,x,x)=^\e 0\end{cases}$$
which can be seen using a proof completely analogous to the one in~(\ref{equations p from -}), with the extra care of checking that every expression which appears is indeed defined.

For the second proof (i.e., the construction of $q(x,y,z,w)$ from the subtraction $x-y$), this is no longer the case. One can still define the $4$-ary partial operation $q\colon A^4\dashrightarrow A$ by setting $q(x,y,z,w)=(x-y)-(z-w)$ exactly for those $x,y,z,w\in A$ for which $x-y$, $z-w$ and $(x-y)-(z-w)$ are defined, but now the existential equation
$$q(x,y,x,y)=^\e 0$$
can no longer be shown to hold in $A$ analogously as in~(\ref{equations q from -}) since $x-y$ is in general not defined for all $x,y\in A$.

Let us now explain the consequences that this difference implies in Category Theory. Let us first see how the algebraic condition that a pointed variety of universal algebras has a subtraction in its theory extends to the context of a finitely complete pointed category. The first thing one can think of is to require that any object $X$ is naturally equipped with an internal subtraction $s\colon X^2\to X$ satisfying the internal version of the axioms $s(x,0)=x$ and $s(x,x)=0$. However, this is in general too strong of a condition; even the category of \emph{subtractive algebras} (i.e., pointed sets equipped with a subtraction~$-$) does not have such an internal operation since $-\colon A^2\to A$ is not a homomorphism in general. The `right' approach has been proposed in~\cite{Janelidze2005} and generalized in~\cite{Janelidze2006} to all Mal'tsev conditions of a similar form to those appearing in~(\ref{equation subtraction}), (\ref{equation term p intro}) and~(\ref{equation term q intro}). The idea is, instead of looking at the equations~(\ref{equation subtraction}) line by line, to look at them `components by components'. In other words, if we write the equations~(\ref{equation subtraction}) in the form of an (extended) matrix
$$\Sub=\left[\begin{array}{cc|c}
x & 0 & x \\
x & x & 0
\end{array}\right],$$
the columns of this matrix give a condition on the relations in a pointed variety~$\VV$. Indeed, writing elements of $A^2$ as column vectors, the `matrix condition' corresponding to $\Sub$ requires that, for any homomorphic binary relation $R\subseteq A^2$ in~$\VV$, we have
$$\left\{\left[\begin{array}{c}
a  \\
a
\end{array}\right], \,\left[\begin{array}{c}
0 \\
a
\end{array}\right] \right\} \subseteq R \quad \implies \quad \left[\begin{array}{c}
a  \\
0
\end{array}\right] \in R$$
for every $a\in A$. Using generalized elements, this condition can be extended to the context of finitely complete pointed categories. Such a category for which each binary relation satisfies this condition is called a \emph{subtractive category}~\cite{Janelidze2005}. Using a similar technique introduced in~\cite{Janelidze2006}, we can extend the conditions of having a ternary term $p$ satisfying the equations~(\ref{equation term p intro}) and of having a $4$-ary term $q$ satisfying the equations~(\ref{equation term q intro}) from the pointed varietal context to the finitely complete pointed context using the matrices
$$\P=\left[\begin{array}{ccc|c}
x & x & x & x \\
x & x & 0 & 0 \\
0 & x & x & 0
\end{array}\right]$$
and
$$\Q=\left[\begin{array}{cccc|c}
x & 0 & 0 & 0 & x \\
x & x & y & y & 0 \\
x & y & x & y & 0
\end{array}\right]$$
respectively. The fact that one can construct a partial term $p$ satisfying the equations~(\ref{equation term p intro}) from a partial subtraction is equivalent to the fact that each subtractive category satisfies the property corresponding to the matrix~$P$, which we denote by $\Sub\Rightarrow_{\lex_\ast} \P$. This equivalence is part of our main Theorem~\ref{theorem implication}. In addition, still according to this theorem, this implication is also equivalent to the implication where the weak versions of these properties are involved, that is, the properties only requiring that the strong relations satisfy the matrix properties. Strong relations (or in general strong monomorphisms) are those which are orthogonal to epimorphisms. We denote this implication by $\Sub\Rightarrow_{\lex_\ast}^\strong\P$, meaning that any \emph{weakly subtractive category} has strong $\P$-closed relations (i.e., has weakly the property corresponding to~$\P$).

Besides, as we noted above, the proof that any subtractive variety admits in its theory a $4$-ary term $q$ satisfying the equations~(\ref{equation term q intro}) cannot be extended to the partial algebraic context. We can actually show that, even using another construction, it is not possible to construct a partial term $q$ satisfying the equations~(\ref{equation term q intro}) existentially from a partial subtraction. In view of our Theorem~\ref{theorem implication}, this thus means that the implications $\Sub\Rightarrow_{\lex_\ast}\Q$ and $\Sub\Rightarrow_{\lex_\ast}^\strong\Q$ do \emph{not} hold.

The proof of Theorem~\ref{theorem implication} relies on two main ingredients. On one hand, we make use of the embedding theorem from~\cite{Jacqmin2019} in which each finitely complete pointed category satisfying the weak property corresponding to an extended matrix $\M$ as above is fully faithfully embedded in a power of the category of partial algebras determined by~$\M$. On the other hand, we use the algorithms developed in~\cite{HJ2022,HJJ2022} to decide whether an implication $\M\Rightarrow_{\lex_\ast}\N$ holds for extended matrices $\M$ and $\N$ as above. Using a computer implementation of these algorithms, it has been shown in~\cite{HJ2022} that the implication $\Sub\Rightarrow_{\lex_\ast}\Q$ does indeed \emph{not} hold (see Figure~8 in that paper), proving in view of our Theorem~\ref{theorem implication} that a partial $4$-ary term $q$ satisfying the equations~(\ref{equation term q intro}) existentially cannot be constructed from a partial subtraction.

As classical examples of properties induced by matrices, let us mention the ones of being \emph{subtractive}~\cite{Janelidze2005}, \emph{unital}~\cite{Bourn1996} and \emph{strongly unital}~\cite{Bourn1996}. Since our results are also formulated in the non-pointed context, let us also mention the properties of being \emph{Mal'tsev}~\cite{CLP1991,CPP1992}, \emph{majority}~\cite{Hoefnagel2019} and \emph{arithmetical}~\cite{Pedicchio1996,HJJ2022}. Among their weak versions, the ones of being \emph{weakly Mal'tsev}~\cite{MartinsFerreiraPhD,MartinsFerreira2008} and \emph{weakly unital}~\cite{MartinsFerreiraPhD} have already been studied.

The rest of the paper is divided in two sections. In the first one, we essentially recall the needed concepts and results from the literature. Among others, we remind the reader with the matrix properties and the algorithms from~\cite{HJ2022,HJJ2022} to decide implications between them. We also briefly recall the theory of partial algebras and the embedding theorem established in~\cite{Jacqmin2019}. In the second section, we first extend the characterizations found in the literature of \emph{trivial matrices} (i.e., those inducing properties that can be satisfied only by preorders) using the language of partial algebras (Theorem~\ref{theorem trivial}). We then come to our main theorem (Theorem~\ref{theorem implication}) characterizing when a matrix property (or rather a set of matrix properties) implies another one. As explained above, we prove that the implication $\M\Rightarrow_{\lex_\ast}\N$ is equivalent to $\M\Rightarrow_{\lex_\ast}^\strong\N$ and also to the condition that one can construct a partial term corresponding to $\N$ from one corresponding to~$\M$. A similar characterization is also established in the non-pointed context. As a corollary (Corollary~\ref{corollary anti-trivial}), we also give a characterization of \emph{anti-trivial matrices}, i.e., those inducing a property satisfied by any finitely complete pointed category. Let us finally mention that in all these characterizations, we also consider the property on a finitely complete (pointed) category induced by a matrix $\M$ in which only relations in a specified class of monomorphisms $\Mc$ are required to be $\M$-closed. This class $\Mc$ is asked to be stable under pullbacks, closed under composition and containing the regular monomorphisms.

\section{Preliminaries}\label{section preliminaries}

\subsection*{Matrix properties}

Given integers $n>0$ and $m,k\geqslant 0$, we denote as in~\cite{HJ2022} by $\matr_\ast(n,m,k)$ the set of (extended) matrices
$$\M=\left[\begin{array}{ccc|c} x_{11} & \dots & x_{1m} & x_{1\,m+1} \\ \vdots & & \vdots & \vdots \\ x_{n1} & \dots & x_{nm} & x_{n\,m+1} \end{array}\right]$$
with $n$ rows, $m$ left columns, one right column and whose entries lie in the set $\{\ast,x_1,\dots,x_k\}$ (i.e., the free pointed set on $k$ variables $x_1,\dots,x_k$). Each such matrix $\M\in\matr_\ast(n,m,k)$ determines a property on $n$-ary (internal) relations $r\colon R\rightarrowtail X_1\times \cdots \times X_n$ in finitely complete pointed categories~$\CC$. Such a relation is given by a subobject of the $n$-fold product $X_1\times \cdots \times X_n$ and is represented by a monomorphism~$r$. To describe the property determined by $\M$ on such relations, we need the concept of row-wise interpretations. Given pointed sets $(S_1,\ast_1),\dots,(S_n,\ast_n)$, a \emph{row-wise interpretation} of $\M$ of type $((S_1,\ast_1),\dots,(S_n,\ast_n))$ is an extended matrix
$$\left[\begin{array}{ccc|c} f_1(x_{11}) & \dots & f_1(x_{1m}) & f_1(x_{1\,m+1}) \\ \vdots & & \vdots & \vdots \\ f_n(x_{n1}) & \dots & f_n(x_{nm}) & f_n(x_{n\,m+1}) \end{array}\right]$$
whose entries are obtained by applying to the elements of the $i^{\textrm{th}}$ row of~$\M$ (for each $i\in\{1,\dots,n\}$) a \emph{pointed function} $f_i\colon(\{\ast,x_1,\dots,x_k\},\ast)\to (S_i,\ast_i)$, i.e., a function $f_i\colon\{\ast,x_1,\dots,x_k\}\to S_i$ such that $f_i(\ast)=\ast_i$. Viewing each hom-set $\CC(X,Y)$ of a finitely complete pointed category $\CC$ as a pointed set where the distinguished element is the zero morphism $0\colon X\to Y$, we say that an $n$-ary relation $r\colon R\rightarrowtail X_1\times \cdots \times X_n$ in $\CC$ is \emph{strictly $\M$-closed}~\cite{Janelidze2006} if, for any object $Z$ of~$\CC$, each row-wise interpretation
$$\left[\begin{array}{ccc|c} g_{11} & \dots & g_{1m} & h_1 \\ \vdots & & \vdots & \vdots \\ g_{n1} & \dots & g_{nm} & h_n \end{array}\right]$$
of $\M$ of type $(\CC(Z,X_1),\dots,\CC(Z,X_n))$ is \emph{compatible} with~$r$, i.e., if, for each $j\in\{1,\dots,m\}$, the morphism $(g_{1j},\dots,g_{nj})\colon Z\to X_1\times\cdots\times X_n$ induced by the $j^{\textrm{th}}$ left column factors through~$r$, then the morphism $(h_1,\dots,h_n)\colon Z\to X_1\times\cdots\times X_n$ induced by the right column also factors through~$r$.

For a pointed set $(S,\ast)$, a (non-row-wise) \emph{interpretation} of $\M$ of type $(S,\ast)$ is a row-wise interpretation of $\M$ of type $((S,\ast),\dots,(S,\ast))$ for which $f_1=\dots=f_n$. We say that a relation $r\colon R\rightarrowtail X^n$ in~$\CC$ is \emph{$\M$-closed}~\cite{Janelidze2006} if, for any object $Z$ of~$\CC$, each interpretation of $\M$ of type $\CC(Z,X)$ is compatible with~$r$.

We denote by $\matr(n,m,k)$ the subset of \emph{non-pointed matrices} of $\matr_\ast(n,m,k)$, i.e., those matrices not containing $\ast$ as an entry. For a non-pointed matrix $\M\in\matr(n,m,k)$, we define analogously as above the concepts of row-wise interpretations of $\M$ of type $(S_1,\dots,S_n)$ and interpretations of $\M$ of type $S$ where $S_1,\dots,S_n,S$ are mere sets. This gives rise to the concepts of \emph{strictly $\M$-closed relations} $r\colon R\rightarrowtail X_1\times\cdots\times X_n$ and of \emph{$\M$-closed relations} $r\colon R\rightarrowtail X^n$ in a (non necessarily pointed) finitely complete category~$\CC$. We also denote by $\matr$ and $\matr_\ast$ the unions
$$\matr=\bigcup_{\substack{n > 0\\m \geqslant 0\\k\geqslant 0}} \matr(n,m,k) \qquad\text{and}\qquad \matr_\ast=\bigcup_{\substack{n > 0\\m \geqslant 0\\k\geqslant 0}} \matr_\ast(n,m,k).$$

Given a class of monomorphisms $\Mc$ stable under pullbacks in a finitely complete category~$\CC$ (respectively in a finitely complete pointed category~$\CC$) and a matrix $\M\in\matr(n,m,k)$ (respectively $\M\in\matr_\ast(n,m,k)$), we say that $\CC$ \emph{has $\M$-closed $\Mc$-relations} if any relation $r\colon R\rightarrowtail X^n$ in $\Mc$ is $\M$-closed (since $\Mc$ is stable under pullbacks, it is in particular closed under pre-composition with isomorphisms and therefore, a relation is said to be in $\Mc$ if one (thus all) representing monomorphism of the relation is in~$\Mc$). Using (the internal version of) Proposition~1.9 in~\cite{Janelidze2006}, we immediately get the following proposition, which slightly generalizes Theorem 2.4 of that paper.

\begin{proposition}\label{proposition M closed and strict M closed}
Let $n>0$ and $m,k\geqslant 0$ be integers, $\M\in\matr(n,m,k)$ (respectively $\M\in\matr_\ast(n,m,k)$), $\CC$ a finitely complete category (respectively a finitely complete pointed category) and $\Mc$ a class of monomorphisms in $\CC$ stable under pullbacks. Then the following statement are equivalent:
\begin{itemize}
\item $\CC$ has $\M$-closed $\Mc$-relations, i.e., any relation $r\colon R\rightarrowtail X^n$ in $\Mc$ is $\M$-closed;
\item $\CC$ has strictly $\M$-closed $\Mc$-relations, i.e., any relation $r\colon R\rightarrowtail X_1\times\cdots\times X_n$ in $\Mc$ is strictly $\M$-closed.
\end{itemize}
\end{proposition}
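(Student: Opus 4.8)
The plan is to derive both equivalent statements from a single observation, namely the internal reformulation of Proposition~1.9 of~\cite{Janelidze2006}, which relates $n$-ary relations on an arbitrary product $X_1\times\cdots\times X_n$ to $n$-ary relations on the power $X^n$ with $X=X_1\times\cdots\times X_n$. The only additional ingredient is that the construction carrying one relation to the other is realized by a pullback, so it preserves membership in the pullback-stable class $\Mc$.

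The first implication is immediate. Suppose $\CC$ has strictly $\M$-closed $\Mc$-relations and let $r\colon R\rightarrowtail X^n$ be a relation in $\Mc$. Regarded as a relation on the product $X\times\cdots\times X$, it is strictly $\M$-closed by hypothesis. Since every interpretation of $\M$ of type $\CC(Z,X)$ is a particular row-wise interpretation of type $(\CC(Z,X),\dots,\CC(Z,X))$ (namely the one with $f_1=\cdots=f_n$), compatibility of all row-wise interpretations with $r$ forces compatibility of all interpretations with $r$, so $r$ is $\M$-closed.

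For the converse, let $r\colon R\rightarrowtail X_1\times\cdots\times X_n$ be a relation in $\Mc$, set $X=X_1\times\cdots\times X_n$, and write $\pi_i\colon X\to X_i$ for the product projections and $p_i\colon X^n\to X$ for the power projections. Let $\varphi\colon X^n\to X$ be the morphism whose $i$-th component is $\pi_i\circ p_i$, and form the pullback $r'\colon R'\rightarrowtail X^n$ of $r$ along $\varphi$; informally, $R'$ consists of those $(z_1,\dots,z_n)\in X^n$ with $(\pi_1(z_1),\dots,\pi_n(z_n))\in R$. As a pullback of a monomorphism, $r'$ is again a monomorphism, hence an $n$-ary relation on $X$, and since $\Mc$ is stable under pullbacks, $r'$ lies in $\Mc$, and is therefore $\M$-closed by hypothesis. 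It then remains to check that the $\M$-closedness of $r'$ forces $r$ to be strictly $\M$-closed, which is precisely the internal version of Proposition~1.9 of~\cite{Janelidze2006}. To this end, fix an object $Z$ and a row-wise interpretation of $\M$ of type $(\CC(Z,X_1),\dots,\CC(Z,X_n))$, given by pointed functions $f_i\colon(\{\ast,x_1,\dots,x_k\},\ast)\to\CC(Z,X_i)$, and suppose that for each $j\in\{1,\dots,m\}$ the left column $(f_1(x_{1j}),\dots,f_n(x_{nj}))\colon Z\to X$ factors through $r$; we must show that the right column $(f_1(x_{1\,m+1}),\dots,f_n(x_{n\,m+1}))\colon Z\to X$ also factors through $r$. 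I would assemble the single pointed function $f\colon(\{\ast,x_1,\dots,x_k\},\ast)\to\CC(Z,X)$ by $f(t)=(f_1(t),\dots,f_n(t))$, which is pointed since $f(\ast)=0$, and consider the interpretation of $\M$ of type $\CC(Z,X)$ that it induces. By the defining property of the pullback, a morphism $u\colon Z\to X^n$ factors through $r'$ if and only if $\varphi\circ u$ factors through $r$; taking $u$ to be the $j$-th left column $(f(x_{1j}),\dots,f(x_{nj}))$ of the induced interpretation and using $\varphi\circ(f(x_{1j}),\dots,f(x_{nj}))=(f_1(x_{1j}),\dots,f_n(x_{nj}))$, we see that this left column factors through $r'$, by our assumption. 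The $\M$-closedness of $r'$ then gives that the right column $(f(x_{1\,m+1}),\dots,f(x_{n\,m+1}))$ factors through $r'$, and since $\varphi\circ(f(x_{1\,m+1}),\dots,f(x_{n\,m+1}))=(f_1(x_{1\,m+1}),\dots,f_n(x_{n\,m+1}))$, the right column of the original row-wise interpretation factors through $r$, as needed. Hence $r$ is strictly $\M$-closed.

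The non-pointed statement is proved by the same argument, merely omitting the verification that $f$ preserves the distinguished element. I do not expect any genuine obstacle: the content is the bookkeeping of the pullback identification of $R'$ together with the observation that assembling the $f_i$ componentwise turns a row-wise interpretation into an honest interpretation, both of which are already packaged in Proposition~1.9 of~\cite{Janelidze2006}; its internal version just amounts to replacing actual elements by generalized elements $Z\to(-)$. The one genuinely new point, and the only place where the hypothesis on $\Mc$ is used, is the remark that pullback-stability of $\Mc$ keeps $r'$ in $\Mc$, which is what upgrades the classical correspondence to its $\Mc$-relativized form.
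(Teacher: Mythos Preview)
Your argument is correct and follows essentially the same approach as the paper, which simply invokes the internal version of Proposition~1.9 in~\cite{Janelidze2006} together with the observation that pullback-stability of~$\Mc$ keeps the derived relation $r'$ in~$\Mc$. You have merely unpacked that citation in full detail, including the explicit construction of~$\varphi$ and the assembly of the row-wise interpretation into a single interpretation, which is exactly the content of that proposition.
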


If one takes $\Mc$ to be the class $\Mc_{\all}$ of all monomorphisms, one gets back the notion of (pointed) finitely complete categories with $\M$-closed relations introduced in~\cite{Janelidze2006} and extensively studied in \cite{HJ2022,HJ2023,HJ2023b,HJJ2022,HJJW2023,Janelidze2006b}. Before giving some examples, let us recall the characterization of (finitary one-sorted) varieties of universal algebras with $\M$-closed relations. We first remind the reader that a variety $\VV$ is a pointed category if and only if it has a nullary term $0$ and any two nullary terms are equal in its theory. Note that, although the notion of $\M$-closed relations is defined via the columns of~$\M$, the characterization of varieties with $\M$-closed relations reads each row of the matrix as an equation.

\begin{theorem}\label{theorem characterization varieties with M-closed relations}\cite{Janelidze2006}
Let $n>0$ and $m,k\geqslant 0$ be integers and
$$\M=\left[\begin{array}{ccc|c} x_{11} & \dots & x_{1m} & x_{1\,m+1} \\ \vdots & & \vdots & \vdots \\ x_{n1} & \dots & x_{nm} & x_{n\,m+1} \end{array}\right]$$
be a matrix in $\matr(n,m,k)$ (respectively in $\matr_\ast(n,m,k)$), whose entries thus lie in $\{x_1,\dots,x_k\}$ (respectively in $\{\ast,x_1,\dots,x_k\}$). A variety $\VV$ (respectively a pointed variety~$\VV$) has $\M$-closed ($\Mc_\all$\nobreakdash-)rel\-ations if and only if its theory contains an $m$-ary term $p$ such that, for each $i\in\{1,\dots,n\}$, the equation
$$p(x_{i1},\dots,x_{im})=x_{i\,m+1}$$
holds in~$\VV$ (where $x_1,\dots,x_k$ are interpreted as different variables and, in the pointed case, $\ast$ is interpreted as the unique constant of~$\VV$).
\end{theorem}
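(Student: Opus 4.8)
The statement to prove is Theorem~\ref{theorem characterization varieties with M-closed relations}, the characterization of varieties with $\M$-closed relations. The plan is to prove the two implications separately, exploiting the fact that in a variety, a relation $r\colon R\rightarrowtail X^n$ carries the structure of a subalgebra of $X^n$, and that the universal algebra $F_{\VV}(x_1,\dots,x_k)$ of polynomials plays the role of a ``generic'' object $Z$ through which generalized elements can be detected.

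\medskip

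\textbf{From the term to $\M$-closedness.} Suppose first that the theory of $\VV$ contains an $m$-ary term $p$ with $p(x_{i1},\dots,x_{im})=x_{i\,m+1}$ for every row $i$. Let $r\colon R\rightarrowtail X^n$ be any relation in $\VV$, $Z$ any object, and consider an interpretation of $\M$ of type $\CC(Z,X)$ given by a single pointed function $f$ (in the pointed case) or function $f$ (in the non-pointed case) sending $x_j$ to a morphism $f(x_j)=:\varphi_j\colon Z\to X$. Write $g_{ij}=f(x_{ij})$ and $h_i=f(x_{i\,m+1})$. Assuming each left column $(g_{1j},\dots,g_{nj})\colon Z\to X^n$ factors through $r$, I want to show the right column $(h_1,\dots,h_n)$ factors through $r$. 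The key computation: since $R$ is a subalgebra of $X^n$ and $p$ is a term, applying $p$ to the $m$ factoring left columns yields a morphism $Z\to R$ whose composite with $r$ is $\bigl(p(g_{11},\dots,g_{1m}),\dots,p(g_{n1},\dots,g_{nm})\bigr)$; but $g_{ij}=f(x_{ij})=\varphi_{?}$ is itself obtained by substituting the $\varphi$'s into the term $x_{ij}$, so by functoriality of term operations $p(g_{i1},\dots,g_{im})$ equals the interpretation under $f$ of the term $p(x_{i1},\dots,x_{im})=x_{i\,m+1}$, which is precisely $h_i$. Hence $(h_1,\dots,h_i)$ factors through $r$, so $r$ is $\M$-closed.

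\medskip

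\textbf{From $\M$-closedness to the term.} Conversely, suppose $\VV$ has $\M$-closed relations. The standard trick is to apply the hypothesis to a cleverly chosen relation on a free algebra. Let $X=F_{\VV}(x_1,\dots,x_k)$ be the free algebra on $k$ generators (in the pointed case, with $\ast$ the constant), and let $R\rightarrowtail X^n$ be the subalgebra of $X^n$ generated by the $m$ columns $\bigl(x_{1j},\dots,x_{nj}\bigr)$, $j=1,\dots,m$, viewed as elements of $X^n$. Taking $Z=F_{\VV}(\emptyset)$ in the pointed case — or more robustly $Z=X$ with the identity-type generalized element — each left column of the tautological interpretation of $\M$ factors through $r$ by construction. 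Since $r$ is $\M$-closed, the right column $\bigl(x_{1\,m+1},\dots,x_{n\,m+1}\bigr)$ lies in $R$, i.e., it is in the subalgebra generated by the $m$ left columns. Therefore there is an $m$-ary term $p$ with $\bigl(x_{1\,m+1},\dots,x_{n\,m+1}\bigr)=\bigl(p(x_{11},\dots,x_{1m}),\dots,p(x_{n1},\dots,x_{nm})\bigr)$ in $X^n$, which, read row by row in the free algebra $X$, says exactly that $p(x_{i1},\dots,x_{im})=x_{i\,m+1}$ holds in $\VV$ for each $i$. One must take a little care that the element of $R$ produced is generated by precisely these $m$ elements (not fewer, not more), but since any element of the generated subalgebra is an $m$-ary term expression in the generators this is automatic.

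\medskip

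\textbf{Main obstacle.} The only genuinely delicate point is the correct choice of the generalized element / object $Z$ in the converse direction so that the tautological row-wise interpretation on $F_{\VV}(x_1,\dots,x_k)$ has its left columns factoring through $r$ while witnessing membership of the right column — essentially the Yoneda-style argument that the free algebra represents the relevant functor. One should also be careful that Proposition~\ref{proposition M closed and strict M closed} is invoked (or re-derived) to pass between $\M$-closed and strictly $\M$-closed when convenient, and that the pointed versus non-pointed bookkeeping of the constant $\ast$ is handled uniformly. Since this is exactly Theorem~2.4 of~\cite{Janelidze2006} (slightly restated), I expect the argument to be short once these conventions are fixed.
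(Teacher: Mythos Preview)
The paper does not give its own proof of this theorem: it is quoted as a known result from~\cite{Janelidze2006} (Theorem~2.4 there), so there is no in-paper argument to compare against. Your two-direction argument is the standard one and is correct; the only cosmetic slips are the stray ``$\varphi_{?}$'' and the typo $(h_1,\dots,h_i)$ for $(h_1,\dots,h_n)$, and in the converse you should simply take $Z$ to be the free algebra on one generator (so that elements of $X$ are exactly morphisms $Z\to X$) or argue elementwise, rather than hedging between two choices of~$Z$.
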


\subsection*{Examples}

We first focus on examples where $\Mc=\Mc_\all$. For the matrix
$$\Mal=\left[\begin{array}{ccc|c}
x_1 & x_2 & x_2 & x_1 \\
x_2 & x_2 & x_1 & x_1
\end{array}\right]$$
in $\matr(2,3,2)$, a finitely complete category with $\Mal$-closed ($\Mc_\all$-)relations is a \emph{Mal'tsev category}~\cite{CPP1992}, generalizing the notions of regular Mal'tsev categories~\cite{CLP1991} and of Mal'tsev varieties~\cite{Maltsev1954} to the left exact context. Making explicit the definition for this matrix, it turns out that a finitely complete category is a Mal'tsev category (i.e., has $\Mal$-closed relations) if and only if, for any binary relation $r\colon R \rightarrowtail X^2$ and any pair of morphisms $z_1,z_2\colon Z\to X$, if the three induced morphisms $(z_1,z_2)$, $(z_2,z_2)$ and $(z_2,z_1)\colon Z\to X^2$ factor through~$r$, so does the morphism $(z_1,z_1)\colon Z\to X^2$. Using Proposition~\ref{proposition M closed and strict M closed}, this is equivalent to the condition that any binary relation $r\colon R\rightarrowtail X_1\times X_2$ is \emph{difunctional}~\cite{Riguet1948}, i.e., given morphisms $z_1,z_2\colon Z\to X_1$ and $z'_1,z'_2\colon Z\to X_2$ such that the morphisms $(z_1,z'_2)$, $(z_2,z'_2)$ and $(z_2,z'_1)\colon Z\to X_1\times X_2$ factor through~$r$, so does the morphism $(z_1,z'_1)\colon Z\to X_1\times X_2$. In this case, Theorem~\ref{theorem characterization varieties with M-closed relations} gives back Mal'tsev's theorem~\cite{Maltsev1954}, i.e., that  a variety $\VV$ of universal algebras is a Mal'tsev category if and only if there exists a ternary term $p(x,y,z)$ in the theory of $\VV$ such that the identities
$$\begin{cases}p(x_1,x_2,x_2) = x_1 \\ p(x_2,x_2,x_1) = x_1 \end{cases}$$
hold in~$\VV$.

If we denote by $\Maj$ the matrix
$$\Maj=\left[\begin{array}{ccc|c}
x_1 & x_1 & x_2 & x_1 \\
x_1 & x_2 & x_1 & x_1 \\
x_2 & x_1 & x_1 & x_1
\end{array}\right]$$
in $\matr(3,3,2)$, we get the notion of \emph{majority categories} as introduced in~\cite{Hoefnagel2019}, i.e., finitely complete categories with $\Maj$-closed relations. A category which is both a majority category and a Mal'tsev category is called an \emph{arithmetical category} in~\cite{HJJ2022}, extending the terminology of~\cite{Pedicchio1996}. These are finitely complete categories with $\Ari$-closed relations, where $\Ari\in\matr(3,3,2)$ is the matrix
$$\Ari=\left[\begin{array}{ccc|c}
x_1 & x_2 & x_2 & x_1 \\
x_2 & x_2 & x_1 & x_1 \\
x_1 & x_2 & x_1 & x_1
\end{array}\right].$$

In the pointed context, we consider the matrices
$$\Uni=\left[\begin{array}{cc|c}
x_1  & \ast & x_1 \\
\ast & x_1  & x_1
\end{array}\right]$$
in $\matr_\ast(2,2,1)$ and
$$\StrUni=\left[\begin{array}{ccc|c}
x_1 & \ast & \ast & x_1 \\
x_2 & x_2  & x_1  & x_1
\end{array}\right]$$
in $\matr_\ast(2,3,2)$. A finitely complete pointed category with $\Uni$-closed relations (respectively with $\StrUni$-closed relations) is a \emph{unital category} (respectively a \emph{strongly unital category}), both notions being introduced in~\cite{Bourn1996}. One can also consider the matrix
$$\Sub=\left[\begin{array}{cc|c}
x_1 & \ast & x_1  \\
x_1 & x_1  & \ast
\end{array}\right]$$
in $\matr_\ast(2,2,1)$. \emph{Subtractive categories} (i.e., finitely complete pointed categories with $\Sub$-closed relations) have been introduced in~\cite{Janelidze2005} where it is shown that a unital subtractive category is nothing else than a strongly unital category. In order to again illustrate the definitions of categories with $\M$-closed relations, let us make explicit the definition of subtractive categories here. A finitely complete pointed category is subtractive if and only if, for each binary relation $r\colon R \rightarrowtail X^2$ and each morphism $z\colon Z\to X$, if the two induced morphisms $(z,z)$ and $(0,z)\colon Z\to X^2$ factor through~$r$, so does the morphism $(z,0)\colon Z\to X^2$. Using Proposition~\ref{proposition M closed and strict M closed}, this is equivalent to the condition that for each binary relation $r\colon R\rightarrowtail X_1\times X_2$ and every morphisms $z\colon Z\to X_1$ and $z'\colon Z\to X_2$, if the morphisms $(z,z')$ and $(0,z')\colon Z\to X_1\times X_2$ factor through~$r$, so does the morphism $(z,0)\colon Z\to X_1 \times X_2$. In this case, Theorem~\ref{theorem characterization varieties with M-closed relations} gives us that a pointed variety $\VV$ is a subtractive category if and only if it is a subtractive variety in the sense of~\cite{Ursini1994}, i.e., that its theory contains a binary term $s(x,y)$ such that the identities
$$\begin{cases}s(x,0)=x \\ s(x,x)=0 \end{cases}$$
hold in its theory (where $0$ is the unique constant of~$\VV$).

Another class of monomorphisms $\Mc$ in a category $\CC$ of interest for this paper is the class $\Mc_\strong$ of \emph{strong monomorphisms}, i.e., those monomorphisms $m\colon X\rightarrowtail Y$ which are orthogonal to every epimorphism, meaning that for each commutative square made of plain arrows
$$\xymatrix{A \ar@{->>}[r]^-{e} \ar[d]_-{f} & B \ar[d]^-{g} \ar@{.>}[ld]^-{d} \\ X \ar@{ >->}[r]_-{m} & Y}$$
where the top morphism $e\colon A\twoheadrightarrow B$ is an epimorphism, there is a (unique) dotted diagonal morphism $d\colon B\to X$ retaining the commutativity of the diagram. This class $\Mc_\strong$ of strong monomorphisms is stable under pullbacks. As it will be of importance later, let us also mention that it is closed under composition and that it contains \emph{regular monomorphisms}, i.e., equalizers. Categories with $\M$-closed $\Mc$-relations have already been considered for $\Mc=\Mc_\strong$ and a general $\M$ in~\cite{Jacqmin2019} (and implicitly, even for a general~$\Mc$).

\emph{Weakly Mal'tsev categories} have been introduced in~\cite{MartinsFerreiraPhD,MartinsFerreira2008} and characterized in~\cite{JMF2012}. In the finitely complete context and using the terminology of the present paper, they can be described as finitely complete categories with $\Mal$-closed $\Mc_\strong$-relations. They thus form a natural weakening of the notion of Mal'tsev categories. We can also see this weakening as follows. From~\cite{Bourn1996}, we know that a finitely complete category $\CC$ is a Mal'tsev category if and only if, for all pullbacks
$$\xymatrix@R=3pc@C=3pc{P \pb \ar@<2pt>@{->>}[r]^-{p_Y} \ar@<-2pt>@{->>}[d]_-{p_X} & Y \ar@{->>}@<-2pt>[d]_-{g} \ar@{ >->}@<2pt>[l]^-{r_Y} \\ X \ar@{->>}@<2pt>[r]^-{f} \ar@{ >->}@<-2pt>[u]_-{l_X} & Z \ar@{ >->}@<2pt>[l]^-{s} \ar@{ >->}@<-2pt>[u]_-{t}}$$
of split epimorphisms where $fs=1_Z=gt$, the induced morphisms $l_X=(1_X,tf)$ and $r_Y=(sg,1_Y)$ are \emph{jointly strongly epimorphic}, i.e., they do not factorize though a common proper subobject of~$P$. Besides, a finitely complete category is weakly Mal'tsev if and only if for all such pullbacks, the morphisms $l_X$ and $r_Y$ are \emph{jointly epimorphic}, i.e., for any pair of morphisms $u,v\colon P \to W$ such that $ul_X=vl_X$ and $ur_Y=vr_Y$ one has $u=v$. The characterization of varieties which form weakly Mal'tsev categories also gives rise to a Mal'tsev condition but more complicated than in the case $\Mc=\Mc_\all$. It has recently been obtained in~\cite{EJMF2023}.

\emph{Weakly unital categories} have been introduced in~\cite{MartinsFerreiraPhD} as (finitely complete) pointed categories such that, for any two objects $X,Y$, the induced morphisms $(1_X,0)\colon X \rightarrowtail X\times Y$ and $(0,1_Y)\colon Y \rightarrowtail X\times Y$ are jointly epimorphic.
$$\xymatrix@C=3pc{X \ar@{ >->}@<-2pt>[r]_-{(1_X,0)} & X \times Y \ar@{->>}@<2pt>[r]^-{p_Y} \ar@{->>}@<-2pt>[l]_-{p_X} & Y \ar@{ >->}@<2pt>[l]^-{(0,1_Y)}}$$
In~\cite{Jacqmin2019}, they have been characterized as finitely complete pointed categories with $\Uni$-closed $\Mc_\strong$-relations. For the comparison, let us mention that unital categories can be described~\cite{Bourn1996} as finitely complete pointed categories such that, for each pair of objects $X,Y$, the morphisms $(1_X,0)$ and $(0,1_Y)$ are jointly strongly epimorphic.

In view of the above two examples, categories with $\M$-closed $\Mc_\strong$-relations were also called \emph{categories weakly with $\M$-closed relations} in~\cite{Jacqmin2019}.

To conclude this tour of examples, let us mention that, denoting the class of regular monomorphisms in a category by $\Mc_{\reg}$, finitely complete categories with $\Mal$-closed $\Mc_\reg$-relations are considered in~\cite{EJMF2023}. The class $\Mc_\reg$ is stable under pullbacks, but in general it fails to be closed under composition.

\subsection*{The algorithm for deciding implications of matrix properties}

As in~\cite{HJ2022,HJJ2022}, we call a \emph{matrix set} a subset of~$\matr_\ast$. A matrix set $S$ is said to be \emph{non-pointed} if $S\subseteq \matr$. Given a matrix set $S\subseteq\matr$ (respectively $S\subseteq\matr_\ast$), a finitely complete category~$\CC$ (respectively a finitely complete pointed category~$\CC$) and a class $\Mc$ of monomorphisms in $\CC$ stable under pullbacks, we say that $\CC$ \emph{has $S$-closed $\Mc$-relations} if $\CC$ has $\M$-closed $\Mc$-relations for all $\M$ in~$S$.

Extending the notation established in~\cite{HJ2023}, given a non-pointed matrix set $S\subseteq\matr$ and a matrix $\N\in\matr$, we write $S\Rightarrow_\lex^\all\N$, or simply $S\Rightarrow_\lex\N$, if any finitely complete category with $S$-closed relations has also $\N$-closed relations. The abbreviation `$\lex$' here stands for `left exact', another name for `finitely complete'. Similarly, we write $S\Rightarrow_\lex^\strong\N$ to mean that any finitely complete category with $S$-closed $\Mc_\strong$-relations has also $\N$-closed $\Mc_\strong$-relations. In the pointed case, i.e., given $S\subseteq\matr_\ast$ and $\N\in\matr_\ast$, we write $S\Rightarrow_{\lex_\ast}^\all\N$, or simply $S\Rightarrow_{\lex_\ast}\N$, to mean that any finitely complete pointed category with $S$-closed relations has also $\N$-closed relations. Finally, we write $S\Rightarrow_{\lex_\ast}^\strong\N$ if any finitely complete pointed category with $S$-closed $\Mc_\strong$-relations has also $\N$-closed $\Mc_\strong$-relations. In all these abbreviations, if $S=\{\M\}$ is a singleton, we sometimes write $\M$ instead of~$\{\M\}$, e.g., $\M\Rightarrow_\lex\N$ instead of $\{\M\}\Rightarrow_\lex\N$.

In~\cite{HJJ2022}, an algorithm was presented to decide whether $S\Rightarrow_\lex\N$ for a non-pointed matrix set $S\subseteq\matr$ and a matrix $\N\in\matr$. In~\cite{HJ2022}, this algorithm was adapted to the pointed context to decide whether $S\Rightarrow_{\lex_\ast}\N$ for a matrix set $S\subseteq\matr_\ast$ and a matrix $\N\in\matr_\ast$. Note that, in the original papers~\cite{HJ2022,HJJ2022}, it was required that $S$ is finite to be able to implement these algorithms on a computer. Since this will not be our concern in the present paper, we can omit this assumption here. Actually, from the results of~\cite{HJ2022, HJJ2022}, we can deduce the following (which can also be deduced from the algorithms recalled below since the number of columns that one can possibly add to $\N$ is finite).

\begin{proposition}
Given $S\subseteq\matr$ and $\N\in\matr$, one has $S\Rightarrow_\lex\N$ if and only if there is a finite subset $S'\subseteq S$ such that $S'\Rightarrow_\lex\N$. Analogously, given $S\subseteq\matr_\ast$ and $\N\in\matr_\ast$, one has $S\Rightarrow_{\lex_\ast}\N$ if and only if there is a finite subset $S'\subseteq S$ such that $S'\Rightarrow_{\lex_\ast}\N$.
\end{proposition}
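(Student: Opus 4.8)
The plan is to prove both directions by passing to the algorithm that decides implications of matrix properties, and more precisely by exploiting the fact that whether $S\Rightarrow_\lex\N$ (or $S\Rightarrow_{\lex_\ast}\N$) holds is determined by a bounded search. First I would recall (or re-derive) from the algorithm of \cite{HJ2022,HJJ2022} the following compactness-type observation: the algorithm deciding $S\Rightarrow_\lex\N$ proceeds by trying to build, in finitely many steps, a certain matrix (an ``extended interpretation'' of $\N$ whose columns are obtained by closing the columns of $\N$ under the matrix conditions coming from $S$), and the number of columns it may ever need to adjoin to $\N$ is a priori bounded — it is at most the number of distinct columns with entries in the variable set of $\N$ (together with $\ast$ in the pointed case), which is finite since $\N$ has finitely many rows. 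Thus the whole derivation ``uses'' only finitely many instances of matrices from $S$.

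The forward direction is trivial: if $S'\subseteq S$ and $S'\Rightarrow_\lex\N$, then any finitely complete category with $S$-closed relations in particular has $S'$-closed relations, hence $\N$-closed relations; so $S\Rightarrow_\lex\N$. The same argument works verbatim in the pointed case and for $\Mc_\strong$. For the converse, suppose $S\Rightarrow_\lex\N$. Run the algorithm of \cite{HJJ2022} on the pair $(S,\N)$; since it returns a positive answer, it produces a finite certificate — a finite sequence of column-closure steps, each step invoking the matrix condition attached to a single matrix $\M\in S$ — terminating with the right column of $\N$ appearing among the generated columns. Let $S'\subseteq S$ be the (finite) collection of matrices of $S$ actually invoked in this certificate. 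Then the very same certificate is a valid run of the algorithm on the pair $(S',\N)$, so the algorithm returns positively on $(S',\N)$ as well, i.e.\ $S'\Rightarrow_\lex\N$. The pointed statement follows identically using the algorithm of \cite{HJ2022} in place of that of \cite{HJJ2022}.

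An alternative, algorithm-free route I would keep in reserve is a direct argument: Theorem~\ref{theorem characterization varieties with M-closed relations} shows that for varieties the property of having $\N$-closed relations amounts to the existence of a single $m$-ary term satisfying $n$ equations, and a standard Birkhoff-style argument shows the implication $S\Rightarrow_\lex\N$ is witnessed already inside the category of finitely presentable structures of a suitable type; a single term in finitely many variables can only mention finitely many of the algebraic operations that the matrices in $S$ force to exist, so only finitely many $\M\in S$ are relevant. However, making this precise requires re-running the equivalence between the categorical implication and its varietal shadow, which is exactly what the cited algorithms encapsulate, so the algorithmic argument above is cleaner.

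The main obstacle I anticipate is purely expository rather than mathematical: one must state precisely enough what ``the algorithm'' does and what constitutes a ``certificate'' of a positive answer, so that the phrase ``the matrices of $S$ actually used'' is well defined. The a priori bound on the number of columns that can be adjoined to $\N$ — which is what guarantees the algorithm terminates and that its certificate is finite — is the key technical point, and it is already implicit in \cite{HJ2022,HJJ2022}; I would cite it rather than reprove it. Everything else is bookkeeping.
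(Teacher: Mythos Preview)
Your proposal is correct and matches the paper's own justification: the paper does not give a detailed proof but simply remarks that the proposition ``can also be deduced from the algorithms recalled below since the number of columns that one can possibly add to $\N$ is finite,'' which is exactly your argument. Your observation that a positive run of the algorithm constitutes a finite certificate invoking only finitely many matrices of $S$ is the right way to make this precise; the only minor point to watch is that the algorithm as stated applies to \emph{non-trivial} $S$, so you should dispose of the trivial case separately (if $S$ contains a trivial matrix $\M$, take $S'=\{\M\}$).
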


As they will be needed, let us now recall these algorithms. In each case, the first step of the algorithm is to deal with trivial matrices. A matrix $\M\in\matr$ is said to be \emph{trivial}~\cite{HJJ2022} if every finitely complete category with $\M$-closed ($\Mc_\all$-)relations is a \emph{preorder}, i.e., for any two objects $X,Y$ in the category, there is at most one morphism $X\to Y$. Similarly, a matrix $\M\in\matr_\ast$ is said to be \emph{trivial}~\cite{HJ2022} if every finitely complete pointed category with $\M$-closed ($\Mc_\all$-)relations is a preorder (which in that case will contain only one isomorphism class of objects). Since $\matr\subset\matr_\ast$, it is sensible to ask whether these two definitions agree if $\M\in\matr$. As proved in~\cite{HJ2022}, they indeed do. In~\cite{HJ2022,HJJ2022}, it is shown how to efficiently recognize whether a matrix is trivial. As this will not be needed here, we do not enter into the details and refer the interested reader to those papers. However, some theoretical characterizations from these papers are recalled in Theorem~\ref{theorem trivial} where new characterizations are also proved. Given a matrix set $S\subseteq\matr_\ast$, we say that $S$ is \emph{trivial} if it contains a trivial matrix.

Now, given a non-pointed non-trivial matrix set $S\subseteq\matr$ and a matrix $\N\in\matr(n,m,k)$ for integers $n>0$ and $m,k\geqslant 0$, the algorithm from~\cite{HJJ2022} to decide whether $S\Rightarrow_\lex\N$ says the following:
\begin{quotation}
Keep expanding the set of left columns of~$\N$, until it is not possible anymore, with the right column of a row-wise interpretation $B$ of type $(\{x_1,\dots,x_k\},\dots,\{x_1,\dots,x_k\})$ of a matrix $\M '\in\matr(n,m',k')$ whose rows are rows of a common matrix $\M\in S$ such that the left columns of~$B$, but not its right column, can be found among the left columns of the expending~$\N$. One has the implication $S\Rightarrow_\lex\N$ if and only if the right column of $\N$ can be found among the left columns of this expanded~$\N$.
\end{quotation}

In the pointed case, given a non-trivial matrix set $S\subseteq \matr_\ast$ and a matrix $\N\in\matr_\ast(n,m,k)$ for integers $n>0$ and $m,k\geqslant 0$, the algorithm from~\cite{HJ2022} to decide whether $S\Rightarrow_{\lex_\ast}\N$ says the following (where the pointed set $\{\ast,x_1,\dots,x_k\}$ is again considered with $\ast$ as distinguished element):
\begin{quotation}
First, add to $\N$ a left column of $\ast$'s. Then, keep expanding the set of left columns of~$\N$, until it is no more possible, with the right column of a row-wise interpretation $B$ of type $(\{\ast,x_1,\dots,x_k\},\dots,\{\ast,x_1,\dots,x_k\})$ of a matrix $\M '\in\matr_\ast(n,m',k')$ whose rows are rows of a common matrix $\M\in S$ such that the left columns of~$B$, but not its right column, can be found among the left columns of the expanding~$\N$. One has the implication $S\Rightarrow_{\lex_\ast}\N$ if and only if the right column of $\N$ can be found among the left columns of this expanded~$\N$.
\end{quotation}

\subsection*{Categories of partial algebras}

In Universal Algebra, a \emph{(finitary one-sorted) signature} $\Sigma$ is a set of operation symbols $\sigma$, each of them being equipped with an integer $n\geqslant 0$ called its arity. Given such a signature $\Sigma$ and a set of variables~$X$, we define the set $\T_\Sigma(X)$ of \emph{$\Sigma$-terms in the variables from~$X$} inductively as follows:
\begin{itemize}
\item the elements of $X$ are terms (i.e., $X\subseteq \T_\Sigma(X)$);
\item given an $n$-ary operation symbol $\sigma\in\Sigma$ and $n$ terms $t_1,\dots,t_n\in\T_\Sigma(X)$, then $\sigma(t_1,\dots,t_n)$ is also a term in $\T_\Sigma(X)$. 
\end{itemize}
Every element of $\T_\Sigma(X)$ is constructed using only those two rules. A \emph{$\Sigma$-equation in the variables from $X$} is then a pair $(t,t')$ of terms in $\T_\Sigma(X)$. A \emph{$\Sigma$-equation} is then nothing else than a $\Sigma$-equation in the variables from some set~$X$.

For a signature $\Sigma$, a \emph{$\Sigma$-partial algebra} $A$ is a set (also denoted $A$ by abuse of notation) equipped with, for each $n$-ary operation symbol $\sigma\in\Sigma$, a \emph{partial} function $\sigma^A\colon A^n \dashrightarrow A$, i.e., a (total) function from a subset of $A^n$ to~$A$. In a $\Sigma$-partial algebra~$A$, each $\Sigma$-term $t$ in the variable from a set $X$ gives rise to a partial function $t^A\colon A^X\dashrightarrow A$ defined inductively as follows:
\begin{itemize}
\item if $t\in X$ is a variable, then $t^A$ is a total function given by the corresponding projection $A^X\to A\colon (a_x)_{x\in X} \mapsto a_t$;
\item if $t=\sigma(t_1,\dots,t_n)$ for some $n$-ary $\sigma\in\Sigma$ and $\Sigma$-terms $t_1,\dots,t_n\in\T_\Sigma(X)$, then $t^A$ is defined on the element $(a_x)_{x\in X}\in A^X$ if and only if $t^A_1,\dots,t^A_n$ are all defined on $(a_x)_{x\in X}$ and $\sigma^A$ is defined on $(t_1^A((a_x)_{x\in X}),\dots,t^A_n((a_x)_{x\in X}))$, in which case
$$t^A((a_x)_{x\in X})=\sigma^A(t_1^A((a_x)_{x\in X}),\dots,t^A_n((a_x)_{x\in X})).$$
\end{itemize}
Following the terminology from~\cite{Burmeister2004}, given a $\Sigma$-equation $(t,t')$ in the variables from~$X$, we say that $A$ satisfies the \emph{existence equation} $t=^\e t'$ (or simply that the equation $t=^\e t'$ holds in~$A$) if $t^A((a_x)_{x\in X})$ and $t'^A((a_x)_{x\in X})$ are both defined for any element $(a_x)_{x\in X}\in A^X$ and they are equal:
$$t^A((a_x)_{x\in X})=t'^A((a_x)_{x\in X}).$$
Although there are other kind of equations that can be defined for partial algebras, in this paper we will only consider existence equations.

Given a pair $(\Sigma,E)$ where $\Sigma$ is a signature and $E$ a set of existence $\Sigma$-equations, a \emph{$(\Sigma,E)$-partial algebra} is $\Sigma$-partial algebra which satisfies all equations of~$E$. A homomorphism $f\colon A\to B$ of $(\Sigma,E)$-partial algebras is a (total) function $f\colon A\to B$ such that, for each $n$-ary operation symbol $\sigma\in\Sigma$ and every $a_1,\dots,a_n\in A$, if $\sigma^A(a_1,\dots,a_n)$ is defined in~$A$, then $\sigma^B(f(a_1),\dots,f(a_n))$ is defined in $B$ and the equality
\begin{equation}\label{equation morphism partial algebras}
f(\sigma^A(a_1,\dots,a_n))=\sigma^B(f(a_1),\dots,f(a_n))
\end{equation}
holds. These form the category $\Part(\Sigma,E)$ of $(\Sigma,E)$-partial algebras. All small limits in such a category of partial algebras exist and are computed as in the category $\Set$ of sets and functions. More specifically, given a small category $\JJ$ and a diagram $D\colon\JJ\to\Part(\Sigma,E)$, the limit of $D$ is constructed as the set
$$L=\{(a_J\in D(J))_{J\in\JJ}\,|\, D(j)(a_{J})=a_{J'} \,\forall j\colon J\to J'\in\JJ\}$$
and, given any $n$-ary $\sigma\in\Sigma$,
$$\sigma^L((a_{1J})_{J\in\JJ},\dots,(a_{nJ})_{J\in\JJ})$$
is defined if and only if $\sigma^{D(J)}(a_{1J},\dots,a_{nJ})$ is defined for all object $J\in\JJ$, and is equal in that case to
$$\sigma^L((a_{1J})_{J\in\JJ},\dots,(a_{nJ})_{J\in\JJ})=(\sigma^{D(J)}(a_{1J},\dots,a_{nJ}))_{J\in\JJ}.$$
The legs of the limit are just the projections. Let us also mention that these categories of partial algebras are \emph{essentially algebraic}~\cite{AHR1988,AR1994}, i.e., \emph{locally presentable}~\cite{GU1971}. As we will need it later on, for a matrix set $S\subseteq\matr$ and a matrix $\N\in\matr$ (respectively a matrix set $S\subseteq\matr_\ast$ and a matrix $\N\in\matr_\ast$), we write $S\Rightarrow_{\essalg}^\strong\N$ (respectively $S\Rightarrow_{\essalg_\ast}^\strong\N$) to mean that any essentially algebraic category (respectively essentially algebraic pointed category) with $S$-closed $\Mc_\strong$-relations has also $\N$-closed $\Mc_\strong$-relations.

Inspired by Theorem~\ref{theorem characterization varieties with M-closed relations}, we now give our main example of a category of partial algebras. Given a matrix set $S\subseteq\matr$ (respectively $S\subseteq\matr_\ast$), we consider the signature $\Sigma^S$ which is constituted of, for each $\M\in S$ with $m$ left columns, an $m$-ary operation symbol $p^\M$ (respectively, the signature $\Sigma_\ast^S$ which is constituted of a nullary operation symbol~$0$ and, for each $\M\in S$ with $m$ left columns, an $m$-ary operation symbol $p^\M$). The set $E^S$ (respectively $E_\ast^S$) is constituted by one existence equation for each row of each $\M\in S$, namely
$$p^\M(x_{i1},\dots,x_{im})=^\e x_{i\,m+1}$$
in the variables from $\{x_1,\dots,x_k\}$ for the $i^{\textrm{th}}$ row of $\M$ if this row is
$$\left[\begin{array}{ccc|c} x_{i1} & \dots & x_{im} & x_{i\,m+1}\end{array}\right]$$
where the $x_{ij}$'s are in $\{x_1,\dots,x_k\}$ (respectively, in the pointed case, the $x_{ij}$'s are in $\{\ast,x_1,\dots,x_k\}$ and in this case $\ast$ is interpreted as $0$ in the above equation; we moreover add $0=^\e 0$ to $E_\ast^S$ to force $0$ to be defined). We denote the category of $(\Sigma^S,E^S)$-partial algebras (respectively of $(\Sigma_\ast^S,E_\ast^S)$-partial algebras) by $\Part^S$ (respectively by $\Part_\ast^S$).

In the pointed case $S\subseteq\matr_\ast$, for each $\M\in S$, since $\M$ has at least one row, this implies that the existence equation
$$p^\M(0,\dots,0)=^\e 0$$
holds in any $(\Sigma_\ast^S,E_\ast^S)$-partial algebra. This shows that, in the pointed case, the category $\Part_\ast^S$ is pointed.

Let us remark that for a non-pointed $S\subseteq\matr$ which contains a matrix $\M$ with no left columns, $p^\M$ is then a constant term and $p^\M=^\e x$ is an equation of~$E^S$. This equation means that, for any element $a$ of a $(\Sigma^S,E^S)$-partial algebra~$A$, $p^\M$ is defined and equal to~$a$. This does \emph{not} prevent $A$ to be empty. In that case, $\Part^S$ is thus a preorder with exactly two isomorphism classes of objects.

As it will useful later on, for a set~$X$, we denote by $\T^S(X)$ (respectively by $\T_\ast^S(X)$) the set of terms $\T_{\Sigma^S}(X)$ (respectively $\T_{\Sigma_\ast^S}(X)$). If $S=\{\M\}$ is a singleton, we make the usual abbreviations $\Sigma^\M:=\Sigma^{\{\M\}}$, $\Sigma_\ast^\M:=\Sigma_\ast^{\{\M\}}$, $E^\M:=E^{\{\M\}}$, $E_\ast^\M:=E_\ast^{\{\M\}}$, $\Part^\M:=\Part^{\{\M\}}$, $\Part_\ast^\M:=\Part_\ast^{\{\M\}}$, $\T^\M(X):=\T^{\{\M\}}(X)$ and $\T_\ast^\M(X):=\T_\ast^{\{\M\}}(X)$. The categories $\Part^\M$ and $\Part_\ast^\M$ have been introduced in~\cite{Jacqmin2019}. We conclude this subsection by mentioning an important property of $\Part^\M$ and $\Part_\ast^\M$, that we will generalize to the case of a matrix set in the next subsection.

\begin{proposition}\label{proposition PartM M-closed strong relations}\cite{Jacqmin2019}
Let $\M\in\matr$ (respectively $\M\in\matr_\ast$). The category $\Part^\M$ (respectively $\Part_\ast^\M$) has $\M$-closed $\Mc_\strong$-relations (in general, it does \emph{not} have $\M$-closed $\Mc_\all$-relations).
\end{proposition}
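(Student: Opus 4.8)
The plan is to reduce the positive part to a single computation inside the product $X^n$ together with an identification of the strong monomorphisms of $\Part^\M$, and to settle the parenthetical negative part by one explicit example. The \emph{key computation} is the following. Let $X$ be an object of $\Part^\M$, so that $X$ — and hence $X^n$, which is computed as in $\Set$ — satisfies every equation of $E^\M$. Let an interpretation of $\M$ of type $\Part^\M(Z,X)$ be given by a function $x_\ell\mapsto\phi_\ell\colon Z\to X$, and for $j\in\{1,\dots,m+1\}$ let $c_j\colon Z\to X^n$ be the morphism induced by the $j$-th column, so that the $i$-th component of $c_j$ is $\phi_\ell$ whenever the $(i,j)$-entry of $\M$ is $x_\ell$. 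Then, on underlying sets, $c_{m+1}(z)=p^{\M,X^n}\bigl(c_1(z),\dots,c_m(z)\bigr)$ for every element $z$ of $Z$, the right-hand side being in particular defined: since $X^n$ is a product this may be checked component by component, where it is exactly the $i$-th equation $p^\M(x_{i1},\dots,x_{im})=^\e x_{i\,m+1}$ of $E^\M$ evaluated at the valuation $x_\ell\mapsto\phi_\ell(z)$, which holds because $X\models E^\M$. (The pointed case is identical, with pointed functions and $\Sigma_\ast^\M$.)

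Next I would show that every strong monomorphism $r\colon R\rightarrowtail X^n$ in $\Part^\M$ has \emph{closed image}: the subset $\mathrm{im}(r)\subseteq X^n$ is closed under every defined operation of $X^n$. Let $\hat R$ denote this subset with the induced partial structure; being closed, $\hat R$ is a sub-partial-algebra of $X^n$ still satisfying $E^\M$, hence an object of $\Part^\M$, and $r$ factors as $R\twoheadrightarrow\hat R\hookrightarrow X^n$. The corestriction $R\to\hat R$ is an epimorphism because $\hat R$ is generated under operations by $\mathrm{im}(r)$; orthogonality of $r$ to it yields a retraction, so $R\to\hat R$ is a split epimorphism, hence an isomorphism, and $\mathrm{im}(r)=\hat R$ is closed. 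The positive statement now follows: if the left columns $c_1,\dots,c_m$ of an interpretation as above all factor through a strong monomorphism $r$, then $c_j(z)\in\mathrm{im}(r)$ for $j\le m$, whence $c_{m+1}(z)=p^{\M,X^n}(c_1(z),\dots,c_m(z))\in\mathrm{im}(r)$ by closedness; since monomorphisms in $\Part^\M$ are exactly the injective homomorphisms, $c_{m+1}$ factors through $r$. Thus every $\Mc_\strong$-relation in $\Part^\M$ (respectively $\Part_\ast^\M$) is $\M$-closed.

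For the negative part it suffices to exhibit, for one non-trivial $\M$, an $\Mc_\all$-relation that is not $\M$-closed, and I would take $\M=\Mal$. Let $F$ be the free $(\Sigma^\Mal,E^\Mal)$-partial algebra on $\{x_1,x_2\}$ (it exists as $\Part^\Mal$ is locally presentable), let $c_1,c_2,c_3\in F^2$ be the three left columns of $\Mal$ and put $C=\{c_1,c_2,c_3\}$. One checks that $C$ underlies a sub-partial-algebra $r\colon R\rightarrowtail F^2$ whose operation $p^{\Mal,R}$ is the restriction of $p^{\Mal,F^2}$ to those triples that $E^\Mal$ forces to be defined; the triple $(c_1,c_2,c_3)$ is \emph{not} among these (it would force two distinct columns of $\Mal$ to coincide), so $p^{\Mal,R}$ is left undefined on it. Since $E^\Mal$ forces $p^{\Mal,F}(x_\ell,x_\ell,x_\ell)=x_\ell$, the constant maps $\kappa_{x_1},\kappa_{x_2}\colon F\to F$ are homomorphisms, and the interpretation $x_\ell\mapsto\kappa_{x_\ell}$ makes $c_1,c_2,c_3$, viewed as constant morphisms $F\to F^2$, factor through $r$; but by the key computation the right column $c_4=p^{\Mal,F^2}(c_1,c_2,c_3)=(x_1,x_1)$ lies outside $C$, hence does not factor through $r$. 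So $r$ is not $\Mal$-closed, and $\Part^\Mal$ does not have $\M$-closed $\Mc_\all$-relations.

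The step I expect to need the most care is verifying that $R$ above is genuinely a $(\Sigma^\Mal,E^\Mal)$-partial algebra — that is, that every triple on which $E^\Mal$ forces $p^{\Mal,R}$ to be defined matches a row of $\Mal$ and is thus compatible with restricting $p^{\Mal,F^2}$, so that omitting $(c_1,c_2,c_3)$ causes no clash with $E^\Mal$. The rest (the component-wise key computation, and the handling of generalized elements in the closed-image argument) is routine once those two facts are in place.
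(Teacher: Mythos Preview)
The paper does not prove this proposition; it simply cites it from~\cite{Jacqmin2019}. Your overall strategy --- show that strong monomorphisms in $\Part^\M$ have closed image (equivalently, are closed monomorphisms in the sense of Proposition~\ref{proposition closed and strong monos PartM}), then conclude $\M$-closedness from the component-wise key computation --- is exactly the approach taken in that reference, and your counterexample with $\M=\Mal$ for the negative parenthetical is correct, including the delicate point you flag: the only triples in $C^3$ on which $E^{\Mal}$ forces $p^{\Mal}$ to be defined are those of shape $(a,b,b)$ or $(b,b,a)$, and $(c_1,c_2,c_3)$ is of neither shape.

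There is, however, a genuine circularity in your closed-image paragraph. You set $\hat R=\mathrm{im}(r)$ with the induced structure and then write ``being closed, $\hat R$ is a sub-partial-algebra of $X^n$ still satisfying $E^\M$'' --- but closedness of $\mathrm{im}(r)$ is exactly the conclusion you are after, so you cannot invoke it here. Your later phrase ``$\hat R$ is generated under operations by $\mathrm{im}(r)$'' is also vacuous under this definition (since then $\hat R=\mathrm{im}(r)$ as sets). The fix is to define $\hat R$ as the \emph{closed sub-partial-algebra of $X^n$ generated by $\mathrm{im}(r)$}: closed sub-partial-algebras are stable under intersection, so this exists, and closed sub-algebras of models of existence equations are again models, so $\hat R\in\Part^\M$. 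Now the corestriction $e\colon R\to\hat R$ is an epimorphism (any two homomorphisms out of $\hat R$ agreeing on $\mathrm{im}(r)$ agree on the closure it generates), the orthogonality square with $e$ on top and $r$ on the bottom yields a diagonal $d$ with $d\circ e=1_R$ and $r\circ d$ equal to the inclusion, whence $e\circ d=1_{\hat R}$ as well; so $R\cong\hat R$ and $\mathrm{im}(r)$ coincides with the underlying set of $\hat R$, hence is closed. With this correction in place, the positive part goes through as you wrote it.
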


\subsection*{Free partial algebras}

Let us now fix two matrix sets $S'\subseteq S\subseteq\matr$ (respectively $S'\subseteq S\subseteq\matr_\ast$). We denote by $\U^{S,S'}\colon\Part^S\to\Part^{S'}$ (respectively $\U_\ast^{S,S'}\colon\Part_\ast^S\to\Part_\ast^{S'}$) the corresponding forgetful functor. In view of the description of small limits in categories of partial algebras recalled above, this functor preserves these small limits. Moreover, it is easy to construct its left adjoint. Given a $(\Sigma^{S'},E^{S'})$-partial algebra~$A$ (respectively a $(\Sigma_\ast^{S'},E_\ast^{S'})$-partial algebra~$A$), we distinguish two cases to construct the universal $(\Sigma^{S},E^{S})$-partial algebra~$\F^{S,S'}(A)$ (respectively the universal $(\Sigma_\ast^{S},E_\ast^{S})$-partial algebra~$\F_\ast^{S,S'}(A)$):
\begin{itemize}
\item In this first case, we suppose that $A$ has at least two elements but there is no $(\Sigma^{S},E^{S})$-partial algebra (respectively $(\Sigma_\ast^{S},E_\ast^{S})$-partial algebra) with at least two elements (i.e., $S$ is trivial anticipating the equivalences of Theorem~\ref{theorem trivial}). In that case, $\F^{S,S'}(A)$ (respectively $\F_\ast^{S,S'}(A)$) is the terminal object, that is, it has exactly one element and any $m$-ary operation symbol of $\Sigma^S$ (respectively of $\Sigma_\ast^S$) is defined on the unique element of $\F^{S,S'}(A)^m$ (respectively of $\F_\ast^{S,S'}(A)^m$). The reflection of $A$ along $\U^{S,S'}$ (respectively along $\U_\ast^{S,S'}$) is then given by the unique function $A\to \U^{S,S'}(\F^{S,S'}(A))$ (respectively $A\to \U_\ast^{S,S'}(\F_\ast^{S,S'}(A))$).
\item Otherwise, $\F^{S,S'}(A)$ (respectively $\F_\ast^{S,S'}(A)$) is obtained by keeping the same underlying $(\Sigma^{S'},E^{S'})$-partial algebra~$A$ (respectively the same underlying $(\Sigma_\ast^{S'},E_\ast^{S'})$-partial algebra~$A$) and defining, for each $\M\in {S\setminus S'}$, $p^\M$ only where it is required by the existence equations related to~$\M$. It is not hard to see that, in this case, these definitions do not contradict each other. The identity function $A\to \U^{S,S'}(\F^{S,S'}(A))$ (respectively $A\to\U_\ast^{S,S'}(\F_\ast^{S,S'}(A))$) gives the reflection of $A$ along $\U^{S,S'}$ (respectively $\U_\ast^{S,S'}$).
\end{itemize}
We have therefore an adjunction $\F^{S,S'}\dashv\U^{S,S'}$ (respectively $\F_\ast^{S,S'}\dashv\U_\ast^{S,S'}$). Moreover, we can see from the above construction of the left adjoint that, if $S$ is not trivial, this adjunction is actually a \emph{co-reflection}, i.e., the unit is an isomorphism. If $S'=\emptyset$ is empty, $\Part^{S'}$ is (equivalent to) the category $\Set$ of sets (respectively $\Part_\ast^{S'}$ is (equivalent to) the category $\Set_\ast$ of pointed sets). We therefore have a left adjoint $\F^S:=\F^{S,\emptyset}$ (respectively $\F_\ast^S:=\F_\ast^{S,\emptyset}$) for the forgetful functor $\U^S:=\U^{S,\emptyset}\colon\Part^S\to\Set$ (respectively $\U_\ast^S:=\U_\ast^{S,\emptyset}\colon\Part_\ast^S\to\Set_\ast$).

It is a classical result that right adjoints preserves strong monomorphisms (see e.g. Proposition~4.3.9 in~\cite{Borceux1994}). We therefore immediately have the following.

\begin{proposition}
Let $S'\subseteq S\subseteq\matr$ (respectively $S'\subseteq S\subseteq\matr_\ast$) be two matrix sets. The forgetful functor $\U^{S,S'}\colon\Part^S\to\Part^{S'}$ (respectively $\U_\ast^{S,S'}\colon\Part_\ast^S\to\Part_\ast^{S'}$) preserves strong monomorphisms.
\end{proposition}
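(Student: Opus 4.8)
The plan is to invoke the general principle that right adjoints preserve monomorphisms, and in fact strong monomorphisms, together with the concrete construction of the adjunction $\F^{S,S'}\dashv\U^{S,S'}$ (respectively $\F_\ast^{S,S'}\dashv\U_\ast^{S,S'}$) that was just established. Concretely, the argument runs as follows. First I would recall that we have the adjunction $\F^{S,S'}\dashv\U^{S,S'}$ (respectively $\F_\ast^{S,S'}\dashv\U_\ast^{S,S'}$) constructed in the preceding paragraph, so that $\U^{S,S'}$ (respectively $\U_\ast^{S,S'}$) is a right adjoint. Then I would apply the classical fact cited just above the statement (Proposition~4.3.9 in~\cite{Borceux1994}), which asserts that any right adjoint preserves strong monomorphisms. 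This immediately yields the conclusion.

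Spelling out slightly more why the cited proposition applies: given a strong monomorphism $m\colon X\rightarrowtail Y$ in $\Part^S$ (respectively $\Part_\ast^S$), one must check that $\U^{S,S'}(m)$ is again a strong monomorphism in $\Part^{S'}$ (respectively $\Part_\ast^{S'}$). That $\U^{S,S'}(m)$ is a monomorphism follows since $\U^{S,S'}$ is a right adjoint (right adjoints preserve limits, hence monomorphisms, since $m$ is a monomorphism iff the square with $m$ on two sides is a pullback). To see orthogonality to epimorphisms, one takes a commutative square whose top arrow $e$ is an epimorphism in $\Part^{S'}$, transposes it across the adjunction to a square in $\Part^S$ whose top arrow is the image of $e$ under the left adjoint $\F^{S,S'}$ — which is epi because left adjoints preserve epimorphisms — uses the strongness of $m$ to obtain a diagonal filler there, and transposes that filler back. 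This is precisely the content of the cited general proposition, so no new computation is required here.

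There is essentially no obstacle: the entire statement is a one-line consequence of the cited classical result once one knows $\U^{S,S'}$ and $\U_\ast^{S,S'}$ are right adjoints, which was the content of the immediately preceding discussion. The only minor point worth a sentence is that the adjunction genuinely exists in \emph{all} cases, including the degenerate trivial case handled separately in the construction above (where $\F^{S,S'}(A)$ is taken to be the terminal object); since that construction was already carried out and verified to give an adjunction regardless of triviality, the right-adjoint property of $\U^{S,S'}$ holds unconditionally, and hence so does the preservation of strong monomorphisms.

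\begin{proof}
By the construction recalled just above, we have adjunctions $\F^{S,S'}\dashv\U^{S,S'}$ and $\F_\ast^{S,S'}\dashv\U_\ast^{S,S'}$; in particular $\U^{S,S'}$ and $\U_\ast^{S,S'}$ are right adjoints. Since right adjoints preserve strong monomorphisms (see e.g.\ Proposition~4.3.9 in~\cite{Borceux1994}), the claim follows.
\end{proof}
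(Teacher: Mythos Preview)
Your proposal is correct and matches the paper's own argument exactly: the paper simply notes that $\U^{S,S'}$ and $\U_\ast^{S,S'}$ are right adjoints by the preceding construction and then invokes the classical fact (Proposition~4.3.9 in~\cite{Borceux1994}) that right adjoints preserve strong monomorphisms.
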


Due to this proposition, we can extend Proposition~\ref{proposition PartM M-closed strong relations} to matrix sets.

\begin{proposition}\label{proposition PartS S-closed strong relations}
Let $S\subseteq\matr$ (respectively $S\subseteq\matr_\ast$) be a matrix set. The category $\Part^S$ (respectively $\Part^S_\ast$) has $S$-closed $\Mc_\strong$-relations.
\end{proposition}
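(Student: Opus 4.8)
The plan is to fix an arbitrary matrix $\M\in S$ and prove that $\Part^S$ (respectively $\Part_\ast^S$) has $\M$-closed $\Mc_\strong$-relations; doing this for every $\M\in S$ is exactly the statement. The two ingredients I would use are Proposition~\ref{proposition PartM M-closed strong relations} applied to $\M$ — which says $\Part^\M$ (respectively $\Part_\ast^\M$) has $\M$-closed $\Mc_\strong$-relations — together with the forgetful functor $\U:=\U^{S,\{\M\}}\colon\Part^S\to\Part^\M$ (respectively $\U_\ast:=\U_\ast^{S,\{\M\}}\colon\Part_\ast^S\to\Part_\ast^\M$). By the discussion preceding the statement, $\U$ preserves all small limits and, by the previous proposition with $S'=\{\M\}$, preserves strong monomorphisms; moreover $\U$ acts as the identity on underlying sets and functions, hence is faithful, and in the pointed case $\U_\ast$ preserves the terminal object — which in a pointed category is the zero object — and therefore preserves zero morphisms.

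So let $r\colon R\rightarrowtail X^n$ be a relation in $\Mc_\strong$ in $\Part^S$, let $Z$ be an object, and let $f$ be an interpretation of $\M$ of type $\Part^S(Z,X)$ (a pointed function in the pointed case); for $j\in\{1,\dots,m+1\}$ write $c_j\colon Z\to X^n$ for the morphism induced by the $j$-th column of the resulting row-wise matrix. Assuming $c_1,\dots,c_m$ factor through $r$, I must show $c_{m+1}$ does as well. The idea is to form the pullback of $r$ along $c_{m+1}$ in $\Part^S$, with projections $\pi_1\colon P\to Z$ and $\pi_2\colon P\to R$: since $\Mc_\strong$ is stable under pullbacks, $\pi_1$ lies in $\Mc_\strong$, and if $\pi_1$ is an isomorphism then $c_{m+1}=r\circ\pi_2\circ\pi_1^{-1}$ factors through $r$. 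Thus it suffices to prove that $\pi_1$ is an isomorphism.

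To do so I would apply $\U$. It preserves the pullback just formed as well as the canonical isomorphism $\U(X^n)\cong(\U X)^n$, so $\U\pi_1,\U\pi_2$ exhibit $\U P$ as the pullback of $\U r$ along $\U c_{m+1}$ in $\Part^\M$. Post-composing $f$ with $\U$ on hom-sets yields an interpretation $\U f$ of $\M$ of type $\Part^\M(\U Z,\U X)$ whose $j$-th column induces $\U c_j$; its left columns $\U c_1,\dots,\U c_m$ factor through $\U r$, and $\U r$ (composed with the above isomorphism) is a strong monomorphism $\U R\rightarrowtail(\U X)^n$. By Proposition~\ref{proposition PartM M-closed strong relations}, this relation is $\M$-closed, so $\U c_{m+1}$ factors through $\U r$; by the universal property of $\U P$ this produces a section of $\U\pi_1$, whence $\U\pi_1$, being both a split epimorphism and a monomorphism, is an isomorphism in $\Part^\M$. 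Its underlying function — which is that of $\pi_1$ — is therefore a bijection, so $\pi_1$ is a surjective homomorphism, hence an epimorphism in $\Part^S$ (a surjection is an epimorphism of sets, so two homomorphisms made equal by pre-composition with it coincide as functions, hence as homomorphisms). Finally, a morphism that is simultaneously a strong monomorphism and an epimorphism is an isomorphism; thus $\pi_1$ is an isomorphism, as desired. The pointed case is handled verbatim with $\U_\ast$ in place of $\U$.

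The reason for the pullback detour — and the point I expect to be the only genuinely load-bearing step beyond the two cited results — is that $\U$ is \emph{not} full, so the morphism $\U Z\to\U R$ witnessing the factorization of $\U c_{m+1}$ through $\U r$ inside $\Part^\M$ need not be the image of any morphism $Z\to R$ in $\Part^S$. Recasting ``$c_{m+1}$ factors through $r$'' as ``$\pi_1$ is an isomorphism'' removes this obstacle, since that property can be detected after forgetting down to $\Part^\M$ once one knows that $\pi_1$ is a strong monomorphism and that surjective homomorphisms in $\Part^S$ are epimorphisms.
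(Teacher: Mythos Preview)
Your proof is correct and follows essentially the same approach as the paper: both reduce to the single-matrix case via the forgetful functor $\U^{S,\{\M\}}$, use that it preserves limits and strong monomorphisms together with Proposition~\ref{proposition PartM M-closed strong relations}, and conclude by the pullback argument that the pulled-back strong monomorphism is surjective and hence an isomorphism. Your write-up is more detailed (e.g.\ spelling out why surjections are epimorphisms and why the pullback detour is needed since $\U$ is not full), but the strategy is identical to the paper's.
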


\begin{proof}
Let $\M\in S\cap\matr_\ast(n,m,k)$ for integers $n>0$ and $m,k\geqslant 0$. Let also $r\colon R\rightarrowtail X^n$ be a $\Mc_\strong$-relation in $\Part^S$ (respectively in $\Part_\ast^S$) and
$$\left[\begin{array}{ccc|c} g_{11} & \dots & g_{1m} & h_1 \\ \vdots & & \vdots & \vdots \\ g_{n1} & \dots & g_{nm} & h_n \end{array}\right]$$
be an interpretation of $\M$ of type $\Part^S(Z,X)$ (respectively of type $\Part_\ast^S(Z,X)$) such that, for each $j\in\{1,\dots,m\}$, the morphism $(g_{1j},\dots,g_{nj})\colon Z\to X^n$ factors through~$r$. We must show that the morphism $(h_1,\dots,h_n)\colon Z\to X^n$ also factors through~$r$, i.e., that the pullback $r'$ of $r$ along $(h_1,\dots,h_n)$ is an isomorphism. Since the forgetful functor $\U^{S,\M}:=\U^{S,\{\M\}}$ (respectively $\U_\ast^{S,\M}:=\U_\ast^{S,\{\M\}}$) preserves limits and strong monomorphisms, and since $\Part^\M$ (respectively $\Part_\ast^\M$) has $\M$-closed $\Mc_\strong$-relations by Proposition~\ref{proposition PartM M-closed strong relations}, we know that $\U^{S,\M}(r')$ (respectively $\U_\ast^{S,\M}(r')$) is an isomorphism. This thus means that $r'$ is surjective and, being a $\Mc_\strong$-relation, it is an isomorphism.
\end{proof}

\subsection*{Closed sub-partial algebras}

Given a signature $\Sigma$ and a set $E$ of existence $\Sigma$-equations, the monomorphisms in $\Part(\Sigma,E)$ are exactly the injective homomorphisms. A \emph{sub-partial algebra} of a $(\Sigma,E)$-partial algebra $B$ is thus given by a $(\Sigma,E)$-partial algebra $A$ where the underlying set of $A$ is included in that of $B$ and such that, for any $n$-ary operation symbol $\sigma\in\Sigma$ and any $a_1,\dots,a_n\in A$, if $\sigma^A(a_1,\dots,a_n)$ is defined in~$A$, then $\sigma^B(a_1,\dots,a_n)$ is defined in $B$ and $\sigma^B(a_1,\dots,a_n)=\sigma^A(a_1,\dots,a_n)$. Note that in general, if $\sigma^B(a_1,\dots,a_n)$ is defined in~$B$, there is no reason for which $\sigma^A(a_1,\dots,a_n)$ is defined in~$A$. A sub-partial algebra for which this reverse implication holds is said to be \emph{closed}~\cite{Hoft1973}. More generally, a homomorphism $f\colon A\to B$ in $\Part(\Sigma,E)$ is said to be \emph{closed} if, for each $n$-ary operation symbol $\sigma\in\Sigma$ and every $a_1,\dots,a_n\in A$, if $\sigma^B(f(a_1),\dots,f(a_n))$ is defined in~$B$, then $\sigma^A(a_1,\dots,a_n)$ is defined in~$A$ (and therefore the identity~(\ref{equation morphism partial algebras}) holds).

Let us recall from~\cite{Jacqmin2019} that in most cases, closed monomorphisms in $\Part^\M$ and $\Part_\ast^\M$ coincide with strong monomorphisms.

\begin{proposition}\label{proposition closed and strong monos PartM}\cite{Jacqmin2019}
For a matrix $\M\in\matr_\ast$, closed monomorphisms in $\Part_\ast^\M$ coincide with strong monomorphisms. For a matrix $\M\in\matr$, strong monomorphisms in $\Part^\M$ are closed monomorphisms and the converse is true if there exists a partial algebra in $\Part^\M$ with at least two elements.
\end{proposition}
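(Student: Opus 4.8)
The plan is to establish the two inclusions of each statement separately, treating the pointed and non-pointed cases in parallel.

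\textbf{Strong monomorphisms are closed (no extra hypothesis needed).} Suppose $m\colon A\rightarrowtail B$ is a monomorphism in $\Part^\M$ (resp.\ $\Part_\ast^\M$) that is not closed. I would form the \emph{closure} $\overline{A}$ of $A$ inside $B$: the smallest sub-partial algebra of $B$ whose underlying set contains that of $A$ and is stable under the defined operations of $B$. Since $B$ satisfies the existence equations $E^\M$ (resp.\ $E_\ast^\M$), which are quantified over arbitrary valuations, one checks immediately that $\overline{A}$ again lies in $\Part^\M$ (resp.\ $\Part_\ast^\M$) and that $\overline{A}\rightarrowtail B$ is a closed monomorphism; thus $m$ factors as $A\xrightarrow{\eta}\overline{A}\rightarrowtail B$. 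An induction on the stage at which an element enters $\overline{A}$ shows that any two homomorphisms out of $\overline{A}$ which agree on $A$ coincide, so $\eta$ is an epimorphism; and $\eta$ is not an isomorphism exactly because $m$ is not closed (either $\overline{A}$ has strictly more elements than $A$, or it has the same underlying set but an operation defined which is undefined in $A$). Now the commutative square with top edge $\eta$, left edge $\mathrm{id}_A$, bottom edge $m$ and right edge $\overline{A}\rightarrowtail B$ has no diagonal filler: a filler $d\colon\overline{A}\to A$ would satisfy $d\eta=\mathrm{id}_A$, making the epimorphism $\eta$ a split monomorphism and hence an isomorphism -- a contradiction. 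So $m$ is not orthogonal to $\eta$, and therefore not strong.

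\textbf{Closed monomorphisms are strong (reduction to a lemma).} Conversely, let $m\colon A\rightarrowtail B$ be a closed monomorphism and consider a commutative square with an epimorphism $e\colon C\twoheadrightarrow D$ as its top edge, verticals $f\colon C\to A$ and $g\colon D\to B$, and $m$ as bottom edge; I must construct a unique diagonal $D\to A$. I would form the pullback $P$ of $m$ along $g$, with projections $\pi_1\colon P\to A$ and $\pi_2\colon P\to D$. Limits in categories of partial algebras are computed as in $\Set$, an operation on $P$ being defined precisely when it is defined on both coordinates; a short computation using that $g$ is a homomorphism and that $m$ is closed then shows that $\pi_2$ is again a closed monomorphism. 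The universal property of the pullback supplies $\widetilde{f}\colon C\to P$ with $\pi_2\widetilde{f}=e$, so $\pi_2$ is an epimorphism. It therefore suffices to prove the following lemma: a closed monomorphism which is also an epimorphism is an isomorphism. Granting it, $\pi_2$ is invertible and $d:=\pi_1\pi_2^{-1}$ is the required diagonal, unique because $m$ is a monomorphism.

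\textbf{The key lemma.} Up to isomorphism, a closed monomorphism $j\colon P\rightarrowtail D$ is the inclusion of a closed sub-partial algebra; being injective and closed it is an isomorphism as soon as it is surjective. So I would assume $P$ is a proper subalgebra of $D$ and show that $j$ is not an epimorphism, by exhibiting two distinct homomorphisms from $D$ into a common object which agree on $P$ -- namely the two coprojections $D\to W$ of the pushout $W=D\sqcup_P D$ of two copies of $D$ along $P$. The point is that, because $P$ is closed in $D$, no operation of $D$ applied to elements of $P$ leaves $P$, and because $\M$ is non-trivial the right column of each row of $\M$ occurs among its left columns; consequently, whenever an existence equation of $E^\M$ (resp.\ $E_\ast^\M$) forces the operation $p^\M$ of $W$ to be defined on a tuple mixing the two glued copies of $D\setminus P$, the forced value is one of the entries of that tuple -- so no new element and no new identification is created. (A genuine conflict between two rows at a common argument tuple would force the identification of all elements, which by the known characterizations of trivial matrices means that $\M$ is trivial.) Hence the underlying set of $W$ is the set-theoretic pushout $P\sqcup(D\setminus P)\sqcup(D\setminus P)$, the two coprojections differ on $D\setminus P$ yet agree on $P$, and $j$ is not an epimorphism. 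This proves the lemma, hence that closed monomorphisms are strong in $\Part_\ast^\M$ always, and in $\Part^\M$ whenever it contains a two-element algebra -- which by Theorem~\ref{theorem trivial} is exactly non-triviality. This hypothesis is sharp: if $\Part^\M$ has no two-element algebra it is the two-object preorder $\emptyset\to\1$, in which the closed monomorphism $\emptyset\rightarrowtail\1$ admits no diagonal filler against the epimorphism $\emptyset\to\1$ and so is not strong.

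\textbf{Main obstacle.} The only non-routine ingredient is the pushout analysis in the key lemma: pushouts of partial algebras are not computed in $\Set$, so one must track exactly which existence equations force operations of $W$ to become defined on tuples mixing the two glued copies, and verify that the forced values never trigger an identification. This is precisely where non-triviality of $\M$ is used, a row (or pair of rows) that would force such an identification being exactly one that makes $\M$ trivial. Everything else -- the closure construction, the pullback-stability of closed monomorphisms, and the reduction of orthogonality to surjectivity of $\pi_2$ -- is a direct check.
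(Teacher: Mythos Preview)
The paper does not prove this proposition: it is quoted from \cite{Jacqmin2019} with no argument, and the subsequent Proposition~\ref{proposition closed and strong monos PartS} is asserted ``using a similar proof than in~\cite{Jacqmin2019}''. There is therefore nothing in the present paper to compare your attempt against.

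On its own merits your architecture is the standard one and is correct: the closure $\overline{A}$ of $A$ in $B$ gives the epimorphism witnessing that a non-closed monomorphism is not strong; the pullback reduction of orthogonality to the lemma ``closed monic epimorphism $=$ isomorphism'' is routine; and attacking that lemma via the cokernel pair $D\sqcup_P D$ is natural. Two comments on the pushout step, which you rightly flag as the only non-routine point. First, the assertion ``because $\M$ is non-trivial the right entry of each row of $\M$ occurs among its left entries'' is a correct \emph{necessary} condition (a row whose right entry is a fresh variable forces any algebra to one element), but be aware it is not sufficient: the matrix $\left[\begin{smallmatrix}x_1&x_2&\vert&x_1\\ x_1&x_2&\vert&x_2\end{smallmatrix}\right]$ is trivial although each row passes your test. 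Second, the ``no new identification'' claim is exactly the statement that for non-trivial $\M$ the existence equations never force two distinct values on the same argument tuple over any set --- equivalently, that the free $(\Sigma^\M,E^\M)$-partial algebra on a set $X$ has $X$ itself as underlying set. This is precisely what the present paper records in its description of $\F^{S,S'}$ just before Proposition~\ref{proposition PartS S-closed strong relations} (the adjunction $\F^{S}\dashv\U^{S}$ is a coreflection when $S$ is non-trivial). Once you invoke that fact explicitly, your construction of $W$ goes through cleanly: on tuples lying in a single copy of $D$ use the operation of $D$; on mixed tuples define $p^\M$ only where an existence equation demands it; closedness of $P$ guarantees the two copies agree on $P$, and the coreflection fact rules out conflicts on mixed tuples. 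Your parenthetical about ``conflicts implying triviality'' is thus pointing at the right phenomenon, but it is cleaner to cite the free-algebra description already available in the paper than to re-derive it inside the pushout.
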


The condition that $\Part^\M$ contains a partial algebra with at least two elements in the above proposition will be rephrased as `$\M$ is not trivial', see the equivalence \ref{theorem trivial M trivial}$\Leftrightarrow$\ref{theorem trivial partial algebras max one element} in Theorem~\ref{theorem trivial}.

Using a similar same proof than in~\cite{Jacqmin2019}, we can generalize the above proposition to the case of matrix sets, giving rise to the following where we use Theorem~\ref{theorem trivial} by anticipation to reformulate the condition.

\begin{proposition}\label{proposition closed and strong monos PartS}
For a matrix set $S\subseteq\matr_\ast$, closed monomorphisms in $\Part_\ast^S$ coincide with strong monomorphisms. For a non-pointed matrix set $S\subseteq\matr$, strong monomorphisms in $\Part^S$ are closed monomorphisms and the converse is true if $S$ is not trivial.
\end{proposition}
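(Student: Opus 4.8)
The plan is to establish the two inclusions between closed and strong monomorphisms separately, in both the pointed and the non-pointed setting, following the proof of the single-matrix case in \cite{Jacqmin2019} and using the forgetful functors to reduce to it whenever possible.

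First, I would handle the inclusion of strong monomorphisms among the closed ones, which holds unconditionally in both settings. Let $m$ be a strong monomorphism in $\Part^S$ (respectively in $\Part_\ast^S$). For each $\M\in S$, the forgetful functor $\U^{S,\M}\colon\Part^S\to\Part^\M$ (respectively $\U_\ast^{S,\M}\colon\Part_\ast^S\to\Part_\ast^\M$) is a right adjoint, hence preserves strong monomorphisms, so $\U^{S,\M}(m)$ is a strong monomorphism in $\Part^\M$ (respectively $\Part_\ast^\M$), and therefore a closed monomorphism there by Proposition~\ref{proposition closed and strong monos PartM}. Now being a closed homomorphism means, by definition, that a certain reverse-implication clause holds for each operation symbol of the signature; since $\Sigma^S$ consists exactly of the symbols $p^\M$ for $\M\in S$ (together with the total nullary symbol $0$ in the pointed case, which imposes no condition), and since $\U^{S,\M}(m)$ being closed in $\Part^\M$ is precisely the clause for $p^\M$, it follows at once that $m$ is closed.

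For the converse inclusion, fix a closed monomorphism $m\colon A\rightarrowtail B$; I must show it is orthogonal to every epimorphism $e\colon C\twoheadrightarrow D$. Two preliminary observations are needed. (i) The image $m(A)$ is a closed sub-partial-algebra of $B$: an operation of $\Sigma^S$ defined on a tuple of elements of $m(A)$ has its value in $m(A)$, exactly because $m$ is closed. (ii) Given a commuting square with $u\colon C\to A$ and $v\colon D\to B$ such that $ve=mu$, the pullback $D':=D\times_B A$ is, via its projection to $D$, the inclusion of a closed sub-partial-algebra of $D$ through which $e$ factors; using (i) and the closedness of $m$ one checks directly that an operation is defined on a tuple in $D'$ iff it is defined on it in $D$, and that $e$ corestricts to a homomorphism $C\to D'$. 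Hence the inclusion $D'\rightarrowtail D$ is an epimorphism. Granting the key lemma below, $D'=D$, so $v$ factors set-theoretically through the injection $m$ as $v=md$; one then checks, again using the closedness of $m$, that $d\colon D\to A$ is a homomorphism, and $d$ is the required diagonal, which is unique since $m$ is a monomorphism.

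It remains to prove the key lemma: in $\Part_\ast^S$ unconditionally, and in $\Part^S$ provided $S$ is not trivial, the inclusion of a closed sub-partial-algebra $N\rightarrowtail D$ that is an epimorphism is surjective. The plan is to argue by contradiction. If $N\subsetneq D$, I would form the pushout $D+_N D$ in $\Part^S$ (respectively $\Part_\ast^S$), with coprojections $j_1,j_2\colon D\to E$; they agree on $N$, so $j_1\neq j_2$ would contradict that $N\rightarrowtail D$ is an epimorphism. The hard part --- and the only point at which the hypothesis on $S$ intervenes --- is to show $j_1\neq j_2$, that is, that the pushout does not amalgamate the two copies of $D\setminus N$. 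Using the explicit description of pushouts in $\Part(\Sigma,E)$, such an amalgamation would have to be forced by a tuple simultaneously matching two rows of matrices of $S$ that prescribe incompatible values, which by the characterizations of trivial matrices recalled and proved in Theorem~\ref{theorem trivial} cannot happen when $S$ is not trivial; in the pointed case, a trivial $S$ makes $\Part_\ast^S$ equivalent to the terminal category, where the claim is vacuous, while a non-trivial one is handled by the same argument with the zero object shared between the two copies. That the hypothesis is genuinely needed in the non-pointed case is witnessed by the inclusion $\emptyset\rightarrowtail\1$ of the empty partial algebra into the terminal one in $\Part^S$ for any trivial $S$, which is closed but not strong. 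I expect the control of this pushout to be the principal obstacle.
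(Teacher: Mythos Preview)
Your plan is sound and essentially the route the paper intends (the paper simply defers to the single-matrix argument of~\cite{Jacqmin2019}). For strong $\Rightarrow$ closed you take a pleasant shortcut the paper does not make explicit: since closedness is a clause-per-operation condition and $\Sigma^S$ (resp.\ $\Sigma_\ast^S$) is the union of the $\Sigma^\M$ for $\M\in S$ (plus the total constant~$0$), it suffices to check each $p^\M$ separately, which you do by pushing along the right adjoint $\U^{S,\M}$ (resp.\ $\U_\ast^{S,\M}$) and invoking Proposition~\ref{proposition closed and strong monos PartM}. This is correct and arguably cleaner than rerunning the argument of~\cite{Jacqmin2019} signature-wide.

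For closed $\Rightarrow$ strong, your reduction to the key lemma is fine, but its justification can be made precise without appealing to an ``explicit description of pushouts'' in $\Part(\Sigma,E)$ (pushouts there are not as transparent as limits). It is enough to put \emph{one} $(\Sigma^S,E^S)$-structure (resp.\ $(\Sigma_\ast^S,E_\ast^S)$-structure) on the set $D\sqcup_N D$ for which both inclusions $j_1,j_2$ are homomorphisms. Declare $p^\M$ on a tuple iff either (a)~all entries lie in one $j_i(D)$ and the corresponding tuple is in the domain of $(p^\M)^D$, with value the $j_i$-image of the $D$-value, or (b)~the tuple matches the left side of some row of~$\M$ under a variable assignment into $D\sqcup_N D$, with value the right-hand entry. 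Closedness of $N$ reconciles the two (a)-branches over~$N$; the (a)/(b) overlap is consistent because $D$ already satisfies the existence equations (and, for non-trivial~$S$, the right-hand entry of every row is $\ast$ or a variable occurring on the left); and internal consistency of~(b) is exactly the statement, recorded in the paper's construction of $\F^S$, that for non-trivial~$S$ the minimal operations on a set do not contradict one another (equivalently the failure of condition~\ref{theorem trivial partial algebras max one element}, resp.~\ref{theorem trivial pointed partial algebra one element}, of Theorem~\ref{theorem trivial}). Clause~(b) alone guarantees the existence equations on $D\sqcup_N D$, so this object lies in $\Part^S$ (resp.\ $\Part_\ast^S$) and witnesses $j_1(d)\neq j_2(d)$ for $d\notin N$. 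Your treatment of the trivial pointed case and the counterexample $\emptyset\rightarrowtail\1$ in the non-pointed trivial case are both correct.
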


\subsection*{The embedding theorem}

We now recall the embedding theorem from~\cite{Jacqmin2019} which will be of crucial importance in the proof of our results. The notation $\CC^{\op}$ stands for the dual of the category $\CC$ and, for a category~$\DD$, the notation $\DD^{\CC^{\op}}$ denotes the category of functors $\CC^{\op}\to\DD$ and natural transformations between them.

\begin{theorem}\label{theorem embedding theorem}\cite{Jacqmin2019}
Given a matrix $\M\in\matr$ (respectively $\M\in\matr_\ast$), a small finitely complete category~$\CC$ (respectively a small finitely complete pointed category~$\CC$) and $\Mc$ a class of monomorphisms in $\CC$ stable under pullbacks, closed under composition and containing the regular monomorphisms, if $\CC$ has $\M$-closed $\Mc$-relations, then there exists a fully faithful embedding $\varphi\colon\CC\hookrightarrow (\Part^\M)^{\CC^{\op}}$ (respectively $\varphi\colon\CC\hookrightarrow (\Part_\ast^\M)^{\CC^{\op}}$) which preserves and reflects finite limits and such that, for every monomorphism $f\in\Mc$ and every object $X\in\CC$, $\varphi(f)_X$ is a closed monomorphism in $\Part^\M$ (respectively in $\Part_\ast^\M$).
\end{theorem}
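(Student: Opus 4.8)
The plan is to realize $\varphi$ as an enrichment of the Yoneda embedding. On objects I would set $\varphi(X)$ to be the presheaf $Z\mapsto\CC(Z,X)$, and on morphisms I would let $\varphi$ act by post-composition, $\varphi(u)_Z=u\circ(-)$ for $u\colon X\to Y$. Everything then reduces to three tasks: \emph{(a)} endow each hom-set $\CC(Z,X)$ with a $(\Sigma^\M,E^\M)$-partial algebra structure (a $(\Sigma_\ast^\M,E_\ast^\M)$-one in the pointed case) that is natural in~$Z$, so that $\varphi(X)$ is genuinely a functor $\CC^\op\to\Part^\M$, and natural in~$X$, so that each $\varphi(u)$ is a morphism of $(\Part^\M)^{\CC^\op}$; \emph{(b)} deduce full faithfulness and preservation/reflection of finite limits; \emph{(c)} check that $\varphi(f)_X$ is a \emph{closed} monomorphism for every $f\in\Mc$ and every object~$X$.

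For~\emph{(a)}, the idea is to let the $m$-ary partial operation $p^\M$ on $\CC(Z,X)$ be induced by an \emph{approximate $\M$-operation} on the object~$X$, i.e.\ a subobject $\pi_X\colon P_X\rightarrowtail X^m$ together with a morphism $v_X\colon P_X\to X$, both natural in~$X$, such that for every row~$i$ of~$\M$ the induced map $X^k\to X^m$ reading off the $i$-th left pattern factors through $\pi_X$ and its composite with $v_X$ is the map $X^k\to X$ reading off the $i$-th right-column entry (in the pointed case $\ast$ is read as the zero morphism). One then declares $p^\M(g_1,\dots,g_m)$ to be defined, with value~$h$, precisely when $\langle g_1,\dots,g_m\rangle\colon Z\to X^m$ factors through~$\pi_X$, $h$ being obtained from that factorization and~$v_X$. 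Naturality in~$Z$ is automatic since the factorization is stable under pre-composition, and the defining property of $(P_X,v_X)$ together with naturality in~$X$ makes each $\varphi(X)(Z)$ an object of $\Part^\M$ and each $\varphi(u)$ a homomorphism. The subtle point is that one should take $(P_X,v_X)$ \emph{maximal}, and this is where the hypothesis that $\CC$ has $\M$-closed $\Mc$-relations is used: it is what guarantees that a maximal such pair exists, depends functorially on~$X$, and, crucially for~\emph{(b)}, sends finite limits of~$\CC$ to finite limits --- so $P_{X_1\times X_2}\cong P_{X_1}\times P_{X_2}$, and $P_E\cong E^m\times_{X^m}P_X$ for an equalizer $e\colon E\rightarrowtail X$. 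In the pointed case one additionally equips $\CC(Z,X)$ with the nullary operation~$0$ interpreted as the zero morphism.

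For~\emph{(b)}: the forgetful functor $(\Part^\M)^{\CC^\op}\to\Set^{\CC^\op}$ is faithful and carries the underlying presheaf of $\varphi(X)$ to $\CC(-,X)$, so $\varphi$ is faithful; and $\varphi$ is full because, by Yoneda, a natural transformation $\CC(-,X)\Rightarrow\CC(-,Y)$ is post-composition by a unique morphism $X\to Y$, which by the naturality in~\emph{(a)} is automatically a morphism of $(\Part^\M)^{\CC^\op}$. Since all small limits in $\Part^\M$ are computed as in~$\Set$ and limits in a functor category are pointwise, the forgetful functor $(\Part^\M)^{\CC^\op}\to\Set^{\CC^\op}$ creates limits; combining this with the preservation of limits by the Yoneda embedding and with the limit-compatibility of~$P$ from~\emph{(a)} shows that $\varphi$ preserves finite limits. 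Reflection of finite limits is then the standard consequence of $\varphi$ being fully faithful, finite-limit preserving, and defined on a finitely complete category. The pointed case is parallel, with $\Set_\ast$ and $\Part_\ast^\M$ replacing $\Set$ and $\Part^\M$.

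Task~\emph{(c)} is the heart of the matter and the step I expect to be the main obstacle. Fix $f\colon A\rightarrowtail B$ in~$\Mc$ and an object~$X$. Post-composition by~$f$ is injective, so $\varphi(f)_X$ is a monomorphism of partial algebras, and closedness is the assertion that if $p^\M(fg_1,\dots,fg_m)$ is defined in $\CC(X,B)$ then $p^\M(g_1,\dots,g_m)$ is already defined in $\CC(X,A)$. Unravelling~\emph{(a)}, this amounts to $(f^m)^{-1}(P_B)\subseteq P_A$ as subobjects of~$A^m$, together with compatibility of the value maps; the inclusion follows from the maximality of $(P_A,v_A)$ once one knows that the restriction of~$v_B$ to $(f^m)^{-1}(P_B)$ factors through~$f$, and \emph{that} is precisely where the hypothesis enters. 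Indeed, since $\Mc$ is stable under pullbacks and closed under composition it is closed under finite products of its members (as $f\times g=(f\times 1)\circ(1\times g)$ is a composite of pullbacks of~$f$ and~$g$); hence $f^n\colon A^n\rightarrowtail B^n$ lies in~$\Mc$, so $A^n\rightarrowtail B^n$ is an $\Mc$-relation on the object~$B$ and therefore $\M$-closed by hypothesis. Applying its $\M$-closedness to the interpretation of~$\M$ over~$B$ extracted from a factorization through~$P_B$ forces the corresponding value of~$v_B$ to factor through~$A$, which is the missing ingredient. The step requiring the most care --- and which I would expect to be the real obstacle --- is making this last sentence precise: turning a factorization through the merely maximal subobject~$P_B$ into a genuine interpretation of~$\M$ over~$B$ whose left columns all factor through $A^n\rightarrowtail B^n$, so that $\M$-closedness of that relation can be applied; it is in order to control this passage that $\Mc$ is required to contain the regular monomorphisms and to be closed under composition and pullback.
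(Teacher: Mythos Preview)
The paper does not prove this theorem: it is quoted from \cite{Jacqmin2019}, and the only information given beyond the statement is that $\varphi$ is obtained as a factorization of the Yoneda embedding $\Y\colon\CC\hookrightarrow\Set^{\CC^{\op}}$ (respectively $\Y_\ast$) through the forgetful functor $(\U^\M)^{\CC^{\op}}$ (respectively $(\U_\ast^\M)^{\CC^{\op}}$). Your high-level plan---putting a partial $\M$-algebra structure on each hom-set so that Yoneda lifts---is exactly this, so at the level of strategy you agree with what the paper records.

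Where your sketch goes beyond the paper it also acquires genuine gaps, both in~(a) and in~(c). In~(a) you posit a \emph{maximal} approximate $\M$-operation $(P_X,v_X)$ with $P_X\rightarrowtail X^m$, but in a bare finitely complete category there is no reason for the poset of pairs $(P,v)$ satisfying your row conditions to have a greatest element, nor for a choice of such pairs to be functorial in~$X$ and to commute with finite limits; you assert all three without argument, and it is precisely the $\M$-closedness hypothesis on $\Mc$-relations that must do this work. In~(c) the difficulty you flag is real and is not resolved by what you wrote: a factorization of $(fg_1,\dots,fg_m)\colon Z\to B^m$ through $P_B$ is just an $m$-tuple of morphisms $Z\to B$; it is not, on the face of it, an interpretation of~$\M$ (an $n\times(m{+}1)$ array of morphisms into~$B$), so there is nothing to which the $\M$-closedness of the $n$-ary relation $f^n\colon A^n\rightarrowtail B^n$ can be applied. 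Bridging this requires either a different definition of the partial operation on $\CC(Z,X)$---one phrased directly in terms of $\Mc$-relations rather than through an unexplained maximal object---or an explicit construction of $P_X$ (e.g.\ as an iterated closure under the $\M$-closedness condition) that makes the link to interpretations visible. As written, your argument for~(c) does not go through, and since~(a) depends on the same unjustified maximality, the whole construction is incomplete rather than merely sketchy.
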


The embedding $\varphi$ can be constructed as a factorization of the Yoneda embedding $\Y\colon\CC\hookrightarrow\Set^{\CC^{\op}}$ (respectively $\Y_\ast\colon\CC\hookrightarrow\Set_\ast^{\CC^{\op}}$) through the functor $(\U^\M)^{\CC^{\op}}$ (respectively~$(\U_\ast^\M)^{\CC^{\op}}$) induced by post-composition with~$\U^\M:=\U^{\{\M\},\emptyset}$ (respectively with~$\U_\ast^\M:=\U_\ast^{\{\M\},\emptyset}$).
$$\xymatrix{ & (\Part^\M)^{\CC^{\op}} \ar[d]^-{(\U^\M)^{\CC^{\op}}} \\ \CC \ar@<2pt>@{^(->}[ru]^-{\varphi} \ar@<-2pt>@{^(->}[r]_-{\Y} & \Set^{\CC^{\op}}} \qquad\qquad \xymatrix{ & (\Part_\ast^\M)^{\CC^{\op}} \ar[d]^-{(\U_\ast^\M)^{\CC^{\op}}} \\ \CC \ar@<2pt>@{^(->}[ru]^-{\varphi} \ar@<-2pt>@{^(->}[r]_-{\Y_\ast} & \Set_\ast^{\CC^{\op}}}$$

Given a matrix set $S\subseteq\matr$ (respectively $S\subseteq\matr_\ast$), the category $\Part^S$ (respectively $\Part_\ast^S$) together with the forgetful functors $\U^S$ and $\U^{S,\M}$ for each $\M\in S$ (respectively $\U_\ast^S$ and $\U_\ast^{S,\M}$ for each $\M\in S$) has the universal property of the limit of the diagram constituted of one occurrence of $\Set$ (respectively of $\Set_\ast$) and, for each $\M\in S$, the forgetful functor $\U^\M$ (respectively $\U_\ast^\M$) to that occurrence of $\Set$ (respectively of $\Set_\ast$).
$$\xymatrix@C=1.5pt{&& \Part^S \ar@{.>}[lld]_-{\U^{S,\M}} \ar@{.>}[rrd]^-{\U^{S,\M'}} \ar@{.>}[dd]^-{\U^S} && \\ \Part^\M \ar[rrd]_-{\U^\M} & \cdots && \cdots & \Part^{\M'} \ar[lld]^-{\U^{\M'}} \\ && \Set &&} \qquad\qquad \xymatrix@C=1.5pt{&& \Part_\ast^S \ar@{.>}[lld]_-{\U_\ast^{S,\M}} \ar@{.>}[rrd]^-{\U_\ast^{S,\M'}} \ar@{.>}[dd]^-{\U_\ast^S} && \\ \Part_\ast^\M \ar[rrd]_-{\U_\ast^\M} & \cdots && \cdots & \Part_\ast^{\M'} \ar[lld]^-{\U_\ast^{\M'}} \\ && \Set_\ast &&}$$
In view of this, the above embedding theorem immediately generalizes for a matrix set as follows.

\begin{theorem}\label{theorem embedding theorem matrix set}
Given a matrix set $S\subseteq\matr$ (respectively $S\subseteq\matr_\ast$), a small finitely complete category~$\CC$ (respectively a small finitely complete pointed category~$\CC$) and $\Mc$ a class of monomorphisms in $\CC$ stable under pullbacks, closed under composition and containing the regular monomorphisms, if $\CC$ has $S$-closed $\Mc$-relations, then there exists a fully faithful embedding $\varphi\colon\CC\hookrightarrow (\Part^S)^{\CC^{\op}}$ (respectively $\varphi\colon\CC\hookrightarrow (\Part_\ast^S)^{\CC^{\op}}$) which preserves and reflects finite limits, such that, for every monomorphism $f\in\Mc$ and every object $X\in\CC$, $\varphi(f)_X$ is a closed monomorphism in $\Part^S$ (respectively in $\Part_\ast^S$) and which makes the diagram
$$\xymatrix{ & (\Part^S)^{\CC^{\op}} \ar[d]^-{(\U^S)^{\CC^{\op}}} \\ \CC \ar@<2pt>@{^(->}[ru]^-{\varphi} \ar@<-2pt>@{^(->}[r]_-{\Y} & \Set^{\CC^{\op}}}$$
respectively
$$\xymatrix{ & (\Part_\ast^S)^{\CC^{\op}} \ar[d]^-{(\U_\ast^S)^{\CC^{\op}}} \\ \CC \ar@<2pt>@{^(->}[ru]^-{\varphi} \ar@<-2pt>@{^(->}[r]_-{\Y_\ast} & \Set_\ast^{\CC^{\op}}}$$
commute.
\end{theorem}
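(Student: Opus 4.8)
The plan is to reduce Theorem~\ref{theorem embedding theorem matrix set} to the already-established single-matrix case, Theorem~\ref{theorem embedding theorem}, by exploiting the limit description of $\Part^S$ (respectively $\Part_\ast^S$) over the diagram consisting of the single-matrix categories $\Part^\M$ and one copy of $\Set$ (respectively $\Part_\ast^\M$ and $\Set_\ast$). First I would record the elementary categorical fact that, for a small category $\CC$, the functor $2$-category construction $(-)^{\CC^{\op}}$ preserves limits, so that $(\Part^S)^{\CC^{\op}}$ is the limit of the diagram obtained by applying $(-)^{\CC^{\op}}$ to each vertex and each forgetful functor of the diagram exhibiting $\Part^S$ as a limit; the legs of this limit are $(\U^{S,\M})^{\CC^{\op}}$ and $(\U^S)^{\CC^{\op}}$.

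Next, since $S$-closed $\Mc$-relations in particular means $\M$-closed $\Mc$-relations for every $\M\in S$, Theorem~\ref{theorem embedding theorem} supplies, for each $\M\in S$, a fully faithful embedding $\varphi_\M\colon\CC\hookrightarrow(\Part^\M)^{\CC^{\op}}$ preserving and reflecting finite limits, sending $\Mc$-monomorphisms pointwise to closed monomorphisms, and factoring the Yoneda embedding $\Y$ through $(\U^\M)^{\CC^{\op}}$. The key point is that all these embeddings $\varphi_\M$ compose with the respective forgetful functors to give the \emph{same} functor, namely the Yoneda embedding $\Y\colon\CC\hookrightarrow\Set^{\CC^{\op}}$; that is, $(\U^\M)^{\CC^{\op}}\circ\varphi_\M=\Y=(\U^{\M'})^{\CC^{\op}}\circ\varphi_{\M'}$ for all $\M,\M'\in S$. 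This is precisely the compatibility data required by the universal property of the limit $(\Part^S)^{\CC^{\op}}$, so there is an induced functor $\varphi\colon\CC\to(\Part^S)^{\CC^{\op}}$ with $(\U^{S,\M})^{\CC^{\op}}\circ\varphi=\varphi_\M$ for each $\M\in S$ and $(\U^S)^{\CC^{\op}}\circ\varphi=\Y$. The last equation is exactly the commutativity of the diagram in the statement.

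It then remains to transport the properties of the $\varphi_\M$ through to $\varphi$. Full faithfulness of $\varphi$ is immediate: if $\varphi(f)=\varphi(g)$ then postcomposing with any leg gives $\varphi_\M(f)=\varphi_\M(g)$, hence $f=g$ by faithfulness of $\varphi_\M$; and fullness follows because $\Y$ is full, any natural transformation $\varphi(X)\to\varphi(Y)$ pushes down along $(\U^S)^{\CC^{\op}}$ to a morphism $\Y(X)\to\Y(Y)$, which comes from some $f\colon X\to Y$, and one checks $\varphi(f)$ agrees with the given transformation after applying each leg (using that a morphism in a limit is determined by its images under the legs, together with faithfulness of the relevant forgetful functors — in fact $(\U^S)^{\CC^{\op}}$ alone is faithful since the underlying-set functor of partial algebras is). Preservation and reflection of finite limits: limits in $(\Part^S)^{\CC^{\op}}$ are computed pointwise and levelwise as in the limit diagram, so a cone is limiting in $(\Part^S)^{\CC^{\op}}$ iff its image under every leg is; hence $\varphi$ preserves (resp.\ reflects) finite limits because each $\varphi_\M$ does and $\Y$ does. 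Finally, for $f\in\Mc$ and $X\in\CC$, $\varphi(f)_X$ is a morphism of $\Part^S$ whose images under all the forgetful functors $\U^{S,\M}$ are the closed monomorphisms $\varphi_\M(f)_X$; since a homomorphism of $(\Sigma^S,E^S)$-partial algebras is a closed monomorphism precisely when it is injective and closed with respect to every operation symbol $p^\M$, and these conditions are detected operation-by-operation, i.e.\ by the functors $\U^{S,\M}$, it follows that $\varphi(f)_X$ is itself a closed monomorphism in $\Part^S$. The pointed case is handled identically, replacing $\Set$ by $\Set_\ast$, $\Y$ by $\Y_\ast$, and each construction by its pointed analogue.

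The only mildly delicate point — the ``main obstacle'', such as it is — is the verification that a monomorphism in $\Part^S$ is closed if and only if each of its images under the legs $\U^{S,\M}$ is closed; this is where one must unwind the definition of closedness (Section on closed sub-partial algebras) and observe that being closed is a conjunction, over the operation symbols, of local conditions, and the operation symbols of $\Sigma^S$ are exactly distributed one-per-matrix across the $\Sigma^\M$. Everything else is a formal consequence of the limit presentation of $\Part^S$ and the $2$-functoriality of $(-)^{\CC^{\op}}$, together with the single-matrix embedding theorem quoted as Theorem~\ref{theorem embedding theorem}.
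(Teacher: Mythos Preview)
Your proposal is correct and follows essentially the same approach as the paper: the paper simply observes that $\Part^S$ (respectively $\Part_\ast^S$) is the limit of the diagram of forgetful functors $\U^\M\colon\Part^\M\to\Set$ (respectively $\U_\ast^\M\colon\Part_\ast^\M\to\Set_\ast$) over $\M\in S$, and then states that ``in view of this, the above embedding theorem immediately generalizes for a matrix set''. You have supplied the details that the paper leaves implicit --- the induced functor via the universal property, the transfer of full faithfulness, limit preservation/reflection, and the operation-by-operation verification that closedness is detected by the legs $\U^{S,\M}$ --- but the strategy is identical.
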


\section{Main results}

We can now state and prove our results reinforcing the links between categories of partial algebras and matrix properties involving a general class $\Mc$ of monomorphisms.

\subsection*{Trivial matrix sets}

Let us start by studying trivial matrix sets. We extend here the characterizations of trivial matrices that can already be found in~\cite{HJ2022,HJ2023b,HJJ2022}. In addition of recalling some known characterizations, we add many new ones. We however omit to recall here many other characterizations found in those papers which allow to effectively recognize when a matrix is trivial. In the following theorem, $\Bool$ denotes the category of Boolean algebras and $\1$ denotes the category with a unique object and the identity on it as unique morphism. A category is said to be \emph{regular}~\cite{BGO1971} if it has finite limits, coequalizers of kernel pairs and such that regular epimorphisms are stable under pullbacks. Note that the following theorem can be used with $S=\{\M\}$ a singleton, giving rise to some characterizations of trivial matrices (instead of trivial matrix sets), some of those already appearing in~\cite{HJ2022,HJ2023b, HJJ2022}.

\begin{theorem}\label{theorem trivial}
For a matrix set $S\subseteq\matr_\ast$ , the following conditions are equivalent:
\begin{enumerate}[label=(\arabic*), ref=(\arabic*)]
\item\label{theorem trivial pointed S trivial} every finitely complete pointed category with $S$-closed relations is equivalent to the single morphism category~$\1$;
\item\label{theorem trivial pointed M trivial} $S$ is trivial, that is, $S$ contains a trivial matrix $\M$, i.e., such that every finitely complete pointed category with $\M$-closed relations is equivalent to the single morphism category~$\1$;
\item\label{theorem trivial pointed weakly S} every finitely complete pointed category with $S$-closed $\Mc_\strong$-relations is equivalent to the single morphism category~$\1$;
\item\label{theorem trivial pointed general class of monos} for every finitely complete pointed category $\CC$ and every class $\Mc$ of monomorphisms in $\CC$ stable under pullbacks, closed under composition, containing all regular monomorphisms and such that $\CC$ has $S$-closed $\Mc$-relations, $\CC$ is equivalent to the single morphism category~$\1$;
\item\label{theorem trivial pointed regular categories} any regular pointed category with $S$-closed relations is equivalent to the single morphism category~$\1$;
\item\label{theorem trivial pointed ess alg categories} any essentially algebraic pointed category with $S$-closed relations is equivalent to the single morphism category~$\1$;
\item\label{theorem trivial pointed varieties} any pointed variety with $S$-closed relations is equivalent to the single morphism category~$\1$, i.e., the equation $x = 0$ holds in its theory for a constant term~$0$;
\item\label{theorem trivial pointed PartS equivalent to 1} $\Part_\ast^S$ is equivalent to the single morphism category~$\1$;
\item\label{theorem trivial pointed partial algebra one element} every $(\Sigma_\ast^S,E_\ast^S)$-partial algebra has exactly one element;
\item\label{theorem trivial pointed Set op} $\Set_\ast^{\op}$ does \emph{not} have $S$-closed relations.
\end{enumerate}
If $S\subseteq\matr$ is non-pointed, these are further equivalent to the following conditions:
\begin{enumerate}[resume, label=(\arabic*), ref=(\arabic*)]
\item\label{theorem trivial S trivial} every finitely complete category with $S$-closed relations is a preorder;
\item\label{theorem trivial M trivial} $S$ contains a trivial matrix $\M$, i.e., such that every finitely complete category with $\M$-closed relations is a preorder;
\item\label{theorem trivial weakly S} every finitely complete category with $S$-closed $\Mc_\strong$-relations is a preorder;
\item\label{theorem trivial general class of monos} for every finitely complete category $\CC$ and every class $\Mc$ of monomorphisms in $\CC$ stable under pullbacks, closed under composition, containing all regular monomorphisms and such that $\CC$ has $S$-closed $\Mc$-relations, $\CC$ is a preorder;
\item\label{theorem trivial regular categories} any regular category with $S$-closed relations is a preorder;
\item\label{theorem trivial ess alg categories} any essentially algebraic category with $S$-closed relations is a preorder;
\item\label{theorem trivial varieties} any variety with $S$-closed relations is a preorder, i.e., the equation $x = y$ holds in its theory;
\item\label{theorem trivial PartS preorder} $\Part^S$ is a preorder;
\item\label{theorem trivial partial algebras max one element} every $(\Sigma^S,E^S)$-partial algebra has at most one element;
\item\label{theorem trivial Set op} $\Set^{\op}$ does \emph{not} have $S$-closed relations;
\item\label{theorem trivial Bool} $\Bool$ does \emph{not} have $S$-closed relations.
\end{enumerate}
\end{theorem}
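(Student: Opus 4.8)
The plan is to prove the long list of equivalences in Theorem~\ref{theorem trivial} by organising them into a cycle (or a small collection of cycles) of implications, exploiting the embedding theorem and the explicit description of the free-forgetful adjunctions established in the preliminaries. First I would treat the pointed case, items \ref{theorem trivial pointed S trivial}--\ref{theorem trivial pointed Set op}, and then explain how the non-pointed case (items \ref{theorem trivial S trivial}--\ref{theorem trivial Bool}) follows by an entirely parallel argument plus the extra observation \ref{theorem trivial Bool} about Boolean algebras. The backbone of the cycle would be
\[
\ref{theorem trivial pointed general class of monos}\Rightarrow\ref{theorem trivial pointed weakly S}\Rightarrow\ref{theorem trivial pointed S trivial}\Rightarrow\ref{theorem trivial pointed M trivial}\Rightarrow\ref{theorem trivial pointed PartS equivalent to 1}\Rightarrow\ref{theorem trivial pointed partial algebra one element}\Rightarrow\ref{theorem trivial pointed general class of monos},
\]
with the remaining items \ref{theorem trivial pointed regular categories}, \ref{theorem trivial pointed ess alg categories}, \ref{theorem trivial pointed varieties}, \ref{theorem trivial pointed Set op} hooked onto this cycle by easy one-line implications (a variety is essentially algebraic, an essentially algebraic category is regular with $S$-closed relations once $\Part_\ast^S$ is, and $\Set_\ast^{\op}$ is a specific finitely complete pointed category).

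The key steps, in order, are as follows. The implication \ref{theorem trivial pointed general class of monos}$\Rightarrow$\ref{theorem trivial pointed weakly S}$\Rightarrow$\ref{theorem trivial pointed S trivial} is immediate since $\Mc_\strong$ and $\Mc_\all$ are both admissible classes of monomorphisms (stable under pullbacks, closed under composition, containing regular monos), and since $S$-closed $\Mc_\all$-relations is a stronger hypothesis than $S$-closed $\Mc_\strong$-relations. For \ref{theorem trivial pointed S trivial}$\Rightarrow$\ref{theorem trivial pointed M trivial} I would invoke the last proposition of the ``algorithm'' subsection: $S\Rightarrow_{\lex_\ast}\N$ reduces to a finite subset, and the statement that every finitely complete pointed category with $S$-closed relations is trivial is exactly $S\Rightarrow_{\lex_\ast}\N_0$ for the matrix $\N_0$ whose single row is $[\,x_1 \mid x_2\,]$ (this is the ``preorder'' matrix), so a finite $S'\subseteq S$ already forces triviality; then a small combinatorial argument (or direct appeal to \cite{HJ2022,HJJ2022}) shows a finite union of non-trivial matrices still yields a non-trivial category, whence some single $\M\in S'$ must be trivial. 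The heart of the proof is \ref{theorem trivial pointed M trivial}$\Rightarrow$\ref{theorem trivial pointed PartS equivalent to 1}: here I would use Proposition~\ref{proposition PartM M-closed strong relations}, namely that $\Part_\ast^\M$ has $\M$-closed $\Mc_\strong$-relations, so if $\M$ is trivial in the strong sense then $\Part_\ast^\M$ is equivalent to~$\1$; then the forgetful co-reflection $\U_\ast^{S,\M}\colon\Part_\ast^S\to\Part_\ast^\M$ (or rather the fact, recalled in the ``free partial algebras'' subsection, that any $(\Sigma_\ast^S,E_\ast^S)$-partial algebra has an underlying $(\Sigma_\ast^\M,E_\ast^\M)$-partial algebra) forces every object of $\Part_\ast^S$ to have at most one element, and since $\Part_\ast^S$ is pointed it has exactly one. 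The passage \ref{theorem trivial pointed PartS equivalent to 1}$\Leftrightarrow$\ref{theorem trivial pointed partial algebra one element} is just the definition of being equivalent to~$\1$ (a pointed category with all objects having one element is equivalent to~$\1$). Finally, \ref{theorem trivial pointed partial algebra one element}$\Rightarrow$\ref{theorem trivial pointed general class of monos} is where the embedding theorem does its work: if $\CC$ is small with $S$-closed $\Mc$-relations for an admissible~$\Mc$, Theorem~\ref{theorem embedding theorem matrix set} embeds $\CC$ fully faithfully and limit-preservingly into $(\Part_\ast^S)^{\CC^{\op}}$; if every $(\Sigma_\ast^S,E_\ast^S)$-partial algebra is a singleton then $\Part_\ast^S\simeq\1$, hence $(\Part_\ast^S)^{\CC^{\op}}\simeq\1$, and a category embedding fully faithfully into~$\1$ is itself~$\1$; the restriction to \emph{small} $\CC$ is harmless since being equivalent to~$\1$ is detected on (full subcategories generated by) pairs of objects and a parallel pair of morphisms. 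For \ref{theorem trivial pointed Set op} I would note that $\Set_\ast^{\op}$ is finitely complete and pointed, so \ref{theorem trivial pointed S trivial} would force $\Set_\ast^{\op}\simeq\1$, which is false; conversely if $S$ is not trivial, a nontrivial object of $\Part_\ast^S$ together with Yoneda-type reasoning (or directly the construction in \cite{HJ2022}) exhibits $\Set_\ast^{\op}$ as having $S$-closed relations — this direction is slightly more delicate and may be cleanest to cite from \cite{HJ2022}.

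For the non-pointed half, the cycle is the mirror image, using $\matr$ in place of $\matr_\ast$, $\Part^S$ in place of $\Part_\ast^S$, ``preorder'' in place of ``equivalent to~$\1$'', and Proposition~\ref{proposition PartM M-closed strong relations} again; the only genuinely new item is \ref{theorem trivial Bool}, which I would slot in via \ref{theorem trivial varieties}$\Rightarrow$\ref{theorem trivial Bool} (if every variety with $S$-closed relations is a preorder then in particular $\Bool$ cannot have $S$-closed relations, since $\Bool$ is visibly not a preorder) together with \ref{theorem trivial Bool}$\Rightarrow$\ref{theorem trivial M trivial}, for which I would use the standard fact that $\Bool$ is a \emph{dense} enough test category for non-triviality — concretely, non-trivial matrices are exactly those for which some finitely complete category fails triviality, and Boolean algebras already witness this because the algorithm's failure certificate (a row-wise interpretation whose columns, but not whose right column, are ``available'') can be realised inside $\Bool^{\op}$ or inside a suitable relation on a free Boolean algebra; again the slick route is to cite the corresponding statement from \cite{HJ2023b,HJJ2022}. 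The main obstacle I anticipate is precisely this last point and its pointed analogue \ref{theorem trivial pointed Set op}: unlike the rest of the chain, which flows formally from the embedding theorem and the adjunction bookkeeping, pinning down that \emph{one specific small test category} ($\Set_\ast^{\op}$, resp.\ $\Bool$) detects non-triviality requires either reproducing the relevant construction from \cite{HJ2022,HJJ2022,HJ2023b} or an independent argument building an explicit non-preorder relation from a two-element partial algebra; I would aim to phrase this as a short lemma, proved by exhibiting the relation directly on powers of the two-element (Boolean, resp.\ pointed) object and checking $S$-closure row by row using the existence equations of $E^S$ (resp.\ $E_\ast^S$), falling back on a citation if the bookkeeping becomes long.
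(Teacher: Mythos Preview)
Your overall shape is reasonable, but the step \ref{theorem trivial pointed M trivial}$\Rightarrow$\ref{theorem trivial pointed PartS equivalent to 1} has a genuine gap. You write that Proposition~\ref{proposition PartM M-closed strong relations} gives $\Part_\ast^\M$ with $\M$-closed $\Mc_\strong$-relations, ``so if $\M$ is trivial in the strong sense then $\Part_\ast^\M\simeq\1$''. But condition~\ref{theorem trivial pointed M trivial} only hands you $\M$ trivial in the $\Mc_\all$ sense, and $\Part_\ast^\M$ does \emph{not} in general have $\M$-closed $\Mc_\all$-relations (the proposition explicitly warns of this). Upgrading ``trivial'' to ``trivial in the strong sense'' is exactly part of the equivalence you are trying to prove, so the argument is circular as stated. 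Relatedly, your \ref{theorem trivial pointed S trivial}$\Rightarrow$\ref{theorem trivial pointed M trivial} via the finite-subset proposition plus ``a finite union of non-trivial matrices still yields a non-trivial category'' is not a one-liner; you would need to exhibit a single non-preorder category simultaneously having $\M$-closed relations for every $\M$ in that finite set, and you give no construction.

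The paper sidesteps both problems by routing differently. For \ref{theorem trivial pointed S trivial}$\Rightarrow$\ref{theorem trivial pointed M trivial} it passes through \ref{theorem trivial pointed Set op}: if $\Set_\ast^{\op}$ fails $S$-closed relations then it fails $\M$-closed relations for some $\M\in S$, and the cited characterisation from~\cite{HJ2022} says exactly that this $\M$ is trivial. Thus $\Set_\ast^{\op}$ is not an afterthought but the pivot that extracts a single trivial matrix. To reach \ref{theorem trivial pointed partial algebra one element}, the paper does not go via $\Part_\ast^\M$ at all; instead it goes \ref{theorem trivial pointed S trivial}$\Rightarrow$\ref{theorem trivial pointed varieties}$\Rightarrow$\ref{theorem trivial pointed partial algebra one element}: given any $(\Sigma_\ast^S,E_\ast^S)$-partial algebra $A$, extend each partial $p^\M$ to a total operation by declaring it $0$ where undefined; this makes $A$ a \emph{total} $(\Sigma_\ast^S,E_\ast^S)$-algebra, hence an object of a pointed variety with $S$-closed relations (Theorem~\ref{theorem characterization varieties with M-closed relations}), which by \ref{theorem trivial pointed varieties} has one element. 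This total-extension trick is the missing idea in your chain, and it is what lets the variety/regular/essentially-algebraic items do real work rather than being ``hooked on'' decoratively. Your use of the embedding theorem for \ref{theorem trivial pointed partial algebra one element}$\Rightarrow$\ref{theorem trivial pointed general class of monos} matches the paper; likewise the non-pointed half is parallel, with \ref{theorem trivial Set op}$\Leftrightarrow$\ref{theorem trivial Bool} simply cited from~\cite{HJ2023b}.
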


\begin{proof}
We first consider the general case where $S\subseteq\matr_\ast$. The implication \ref{theorem trivial pointed M trivial}$\Rightarrow$\ref{theorem trivial pointed S trivial} is obvious while \ref{theorem trivial pointed S trivial}$\Rightarrow$\ref{theorem trivial pointed Set op} follows from the fact that $\Set_\ast^{\op}$ is not equivalent to~$\1$. If \ref{theorem trivial pointed Set op} holds, $S$ contains a matrix $\M$ for which $\Set_\ast^{\op}$ does not have $\M$-closed relations. Using Theorem~3.1 in~\cite{HJ2022}, this means that $\M$ is trivial. We have therefore already proved the equivalences \ref{theorem trivial pointed S trivial}$\Leftrightarrow$\ref{theorem trivial pointed M trivial}$\Leftrightarrow$\ref{theorem trivial pointed Set op}. The implication \ref{theorem trivial pointed general class of monos}$\Rightarrow$\ref{theorem trivial pointed weakly S} follows from the fact that the class of strong monomorphisms in a category is stable under pullbacks, closed under composition and contains regular monomorphisms. The implications \ref{theorem trivial pointed weakly S}$\Rightarrow$\ref{theorem trivial pointed S trivial}$\Rightarrow$\ref{theorem trivial pointed regular categories} are obvious, as well as \ref{theorem trivial pointed regular categories}$\Rightarrow$\ref{theorem trivial pointed varieties} since varieties of universal algebras form regular categories. Since essentially algebraic categories are complete and varieties are essentially algebraic categories, the implications \ref{theorem trivial pointed S trivial}$\Rightarrow$\ref{theorem trivial pointed ess alg categories}$\Rightarrow$\ref{theorem trivial pointed varieties} are also straightforward. Concerning the implication \ref{theorem trivial pointed varieties}$\Rightarrow$\ref{theorem trivial pointed partial algebra one element}, suppose $A$ is a $(\Sigma_\ast^S,E_\ast^S)$-partial algebra. Note that $A$ cannot be empty since $0$ needs to be defined in~$A$. For each matrix $\M\in S$ with $m$ left columns, the $m$-ary operation symbol $p^\M\in \Sigma_\ast^\M$ can be extended to a total operation $(p^\M)^A$ on $A$ by defining it to be $0$ on any $m$-tuple in $A^m$ on which it is not yet defined. Doing so, $A$ becomes a total $(\Sigma_\ast^S,E_\ast^S)$-algebra. Using Theorem~\ref{theorem characterization varieties with M-closed relations}, the variety of total $(\Sigma_\ast^S,E_\ast^S)$-algebras has $S$-closed relations. If \ref{theorem trivial pointed varieties} holds, this means that $A$ as a unique element, proving the implication \ref{theorem trivial pointed varieties}$\Rightarrow$\ref{theorem trivial pointed partial algebra one element}. To prove the equivalence \ref{theorem trivial pointed partial algebra one element}$\Leftrightarrow$\ref{theorem trivial pointed PartS equivalent to 1}, it suffices to notice that any $(\Sigma_\ast^S,E_\ast^S)$-partial algebra $A$ with a unique element is isomorphic to the terminal object of $\Part_\ast^S$ since $0$ has to be defined as well as $(p^\M)^A(0,\dots,0)$ for each $\M\in S$. We conclude the first part of the proof with the implication \ref{theorem trivial pointed partial algebra one element}$\Rightarrow$\ref{theorem trivial pointed general class of monos}. Let us suppose \ref{theorem trivial pointed partial algebra one element} holds and let $\CC$ be a finitely complete pointed category and $\Mc$ a class of monomorphisms in $\CC$ stable under pullbacks, closed under composition, containing all regular monomorphisms and such that $\CC$ has $S$-closed $\Mc$-relations. We shall prove that $\CC$ is equivalent to~$\1$. Since $\CC$ has a terminal object, it is not empty and thus it remains to prove that given any two objects $X,Y$ in~$\CC$, $0$ is the unique morphism $X\to Y$. If $\CC$ is a small category, one can use the Embedding Theorem~\ref{theorem embedding theorem matrix set} to obtain a fully faithful embedding $\varphi\colon\CC\hookrightarrow (\Part_\ast^S)^{\CC^{\op}}$. If there were two different morphisms $f,g\colon X\to Y$, then $\varphi(f)\neq\varphi(g)$ and so $\varphi(f)_Z\neq\varphi(g)_Z$ for some object~$Z\in\CC^{\op}$. However, this is not possible since $\Part_\ast^S$ is supposed to be equivalent to~$\1$. If $\CC$ is not small, one can extract from the proof of the Embedding Theorem~\ref{theorem embedding theorem} in~\cite{Jacqmin2019} a $(\Sigma_\ast^S,E_\ast^S)$-partial algebraic structure on $\CC(X,Y)$ and conclude by~\ref{theorem trivial pointed partial algebra one element} that it contains only the zero morphism.

Let us now suppose that $S\subseteq\matr$. The equivalence \ref{theorem trivial Set op}$\Leftrightarrow$\ref{theorem trivial Bool} can be found in~\cite{HJ2023b} while the equivalence \ref{theorem trivial pointed M trivial}$\Leftrightarrow$\ref{theorem trivial M trivial} is established in~\cite{HJ2022}. The remaining implications are proved similarly as above except that \ref{theorem trivial Set op}$\Rightarrow$\ref{theorem trivial M trivial} follows now from Theorem~2.3 in~\cite{HJJ2022} and \ref{theorem trivial PartS preorder}$\Rightarrow$\ref{theorem trivial partial algebras max one element} can be proved as follows. For any element $a$ in a $(\Sigma^S,E^S)$-partial algebra~$A$, one has that, for each $\M\in S$, $(p^\M)^A(a,\dots,a)$ is defined and equal to~$a$ (since $\M$ has at least one row). Such an element thus determines a morphism from the terminal $(\Sigma^S,E^S)$-partial algebra to~$A$, sending the unique element in the domain to~$a$. If $\Part^S$ is a preorder, this implies that $A$ cannot have more than one element.
\end{proof}

In~\cite{HJ2022,HJJ2022}, for a matrix set $S\subseteq\matr$ (respectively $S\subseteq\matr_\ast$), we denoted by $\mclex S$ (respectively by $\mclex_\ast S$) the collection of finitely complete (respectively finitely complete pointed) categories with $S$-closed relations. Similarly, we denote here by $\mclex^\strong S$ (respectively $\mclex_\ast^\strong S$) the collection of finitely complete (respectively finitely complete pointed) categories with $S$-closed $\Mc_\strong$-relations.

\begin{remark}\label{remark empty matrices}
Let $\M\in\matr(n,m,k)$ for integers $n>0$ and $m,k\geqslant 0$. If $m=0$, then $\M$ is automatically trivial and $\mclex\{\M\}$ is the collection of categories equivalent to~$\1$. If $\M$ is trivial but $m>0$, $\mclex\{\M\}$ is then the bigger collection of finitely complete preorders. Trivial matrix sets determine thus two different collections of categories when looking at $\Mc=\Mc_\all$. Although the equivalence \ref{theorem trivial S trivial}$\Leftrightarrow$\ref{theorem trivial weakly S} holds in the above theorem, trivial matrix sets determine only one collection of categories when looking at $\Mc=\Mc_\strong$. Indeed, if $S$ is trivial, $\mclex^\strong S$ is always the collection of finitely complete preorders. This is due to the fact that in finitely complete preorders, $\Mc_\strong$ is the class of isomorphisms and therefore they all have $S$-closed $\Mc_\strong$-relations even if $S$ contains a matrix with no left columns. This distinction disappears in the pointed case because if $S\subseteq\matr_\ast$ is a trivial matrix set, both $\mclex_\ast S$ and $\mclex_\ast^\strong S$ are the collection of categories equivalent to~$\1$.
\end{remark}

\subsection*{Implications of matrix properties}

We now come to our main result.

\begin{theorem}\label{theorem implication}
Let $S\subseteq\matr_\ast$ be a matrix set and $\N\in\matr_\ast$ be a matrix such that $\N$ has $m\geqslant 0$ left columns. The following conditions are equivalent:
\begin{enumerate}[label=(\arabic*), ref=(\arabic*)]
\item\label{theorem implication pointed lex} $S\Rightarrow_{\lex_\ast}\N$;
\item\label{theorem implication pointed strong} $S\Rightarrow_{\lex_\ast}^\strong\N$;
\item\label{theorem implication pointed essentially algebraic} $S\Rightarrow_{\essalg_\ast}^\strong\N$;
\item\label{theorem implication pointed general class of monos} for every finitely complete pointed category $\CC$ and every class $\Mc$ of monomorphisms in $\CC$ stable under pullbacks, closed under composition and containing all regular monomorphisms, if $\CC$ has $S$-closed $\Mc$-relations, then it has also $\N$-closed $\Mc$-relations;
\item\label{theorem implication pointed PartS} $\Part_\ast^S$ has $\N$-closed $\Mc_\strong$-relations;
\item\label{theorem implication pointed term} there exists an $m$-ary term $q\in\T_\ast^S(\{y_1,\dots,y_m\})$ such that, for any $(\Sigma_\ast^S,E_\ast^S)$-partial algebra~$A$, $q^A\colon A^m\dashrightarrow A$ makes $A$ a $(\Sigma_\ast^\N,E_\ast^\N)$-partial algebra.
\end{enumerate}
Moreover, if $S\subseteq\matr$, $\N\in\matr$ and $m>0$, these are further equivalent to the following conditions:
\begin{enumerate}[resume, label=(\arabic*), ref=(\arabic*)]
\item\label{theorem implication lex} $S\Rightarrow_{\lex}\N$;
\item\label{theorem implication strong} $S\Rightarrow_{\lex}^\strong\N$;
\item\label{theorem implication essentially algebraic} $S\Rightarrow_{\essalg}^\strong\N$;
\item\label{theorem implication general class of monos} for every finitely complete category $\CC$ and every class $\Mc$ of monomorphisms in $\CC$ stable under pullbacks, closed under composition and containing all regular monomorphisms, if $\CC$ has $S$-closed $\Mc$-relations, then it has also $\N$-closed $\Mc$-relations;
\item\label{theorem implication PartM} $\Part^S$ has $\N$-closed $\Mc_\strong$-relations;
\item\label{theorem implication term} there exists an $m$-ary term $q\in\T^S(\{y_1,\dots,y_m\})$ such that, for any $(\Sigma^S,E^S)$-partial algebra~$A$, $q^A\colon A^m\dashrightarrow A$ makes $A$ a $(\Sigma^\N,E^\N)$-partial algebra.
\end{enumerate}
\end{theorem}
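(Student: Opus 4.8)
The plan is to prove the pointed equivalences \ref{theorem implication pointed lex}--\ref{theorem implication pointed term} by the cycle \ref{theorem implication pointed general class of monos}$\Rightarrow$\ref{theorem implication pointed lex}$\Rightarrow$\ref{theorem implication pointed term}$\Rightarrow$\ref{theorem implication pointed PartS}$\Rightarrow$\ref{theorem implication pointed general class of monos}, supplemented by \ref{theorem implication pointed general class of monos}$\Rightarrow$\ref{theorem implication pointed strong}$\Rightarrow$\ref{theorem implication pointed essentially algebraic}$\Rightarrow$\ref{theorem implication pointed PartS} to bring in \ref{theorem implication pointed strong} and \ref{theorem implication pointed essentially algebraic}; the non-pointed statements \ref{theorem implication lex}--\ref{theorem implication term} are then handled by the same scheme. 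Four of these implications are immediate: \ref{theorem implication pointed general class of monos}$\Rightarrow$\ref{theorem implication pointed lex} and \ref{theorem implication pointed general class of monos}$\Rightarrow$\ref{theorem implication pointed strong} are the instances $\Mc=\Mc_\all$ and $\Mc=\Mc_\strong$ of~\ref{theorem implication pointed general class of monos}; \ref{theorem implication pointed strong}$\Rightarrow$\ref{theorem implication pointed essentially algebraic} holds because essentially algebraic categories are finitely complete; and \ref{theorem implication pointed essentially algebraic}$\Rightarrow$\ref{theorem implication pointed PartS} holds since $\Part_\ast^S$ is an essentially algebraic pointed category with $S$-closed $\Mc_\strong$-relations by Proposition~\ref{proposition PartS S-closed strong relations}.

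For \ref{theorem implication pointed PartS}$\Rightarrow$\ref{theorem implication pointed general class of monos} I would use the embedding theorem. Given $\CC$ with $S$-closed $\Mc$-relations, after a routine reduction to $\CC$ small (passing to a small full subcategory closed under finite limits, on which the induced class of monomorphisms still satisfies the hypotheses imposed on $\Mc$), Theorem~\ref{theorem embedding theorem matrix set} yields a fully faithful embedding $\varphi\colon\CC\hookrightarrow(\Part_\ast^S)^{\CC^{\op}}$ preserving finite limits---hence the zero object and zero morphisms---and carrying $\Mc$-monomorphisms to pointwise closed, hence by Proposition~\ref{proposition closed and strong monos PartS} pointwise strong, monomorphisms. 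Since $\Part_\ast^S$ has $\N$-closed $\Mc_\strong$-relations by~\ref{theorem implication pointed PartS}, a pointwise computation---using that a factorisation of a natural transformation through a pointwise monomorphism is automatically natural---shows that the relations of $(\Part_\ast^S)^{\CC^{\op}}$ represented by pointwise closed monomorphisms are $\N$-closed; transporting an $\N$-interpretation of type $\CC(Z,X)$ and the compatibilities of its left columns forward along $\varphi$, applying this, and pulling the resulting compatibility of the right column back along the fully faithful $\varphi$ then shows that $\CC$ has $\N$-closed $\Mc$-relations.

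The two steps carrying the content are \ref{theorem implication pointed lex}$\Leftrightarrow$\ref{theorem implication pointed term} and \ref{theorem implication pointed term}$\Rightarrow$\ref{theorem implication pointed PartS}, both resting on one observation: if $r\colon R\rightarrowtail X^n$ exhibits $R$ as a closed sub-partial-algebra of a finite power in $\Part_\ast^S$, then any $\Sigma_\ast^S$-term is interpreted in $X^n$ componentwise and its interpretation restricts to that in $R$, so the closed sub-partial-algebra of $X^n$ generated by given elements is exactly the set of defined values on tuples of those elements of the $\Sigma_\ast^S$-terms. Assuming $S$ non-trivial (the trivial case is disposed of directly by Theorem~\ref{theorem trivial}: every $(\Sigma_\ast^S,E_\ast^S)$-partial algebra is then a point, so $q=y_1$ works when $m>0$, and $q=0$ in the pointed case), write $P$ for the free $(\Sigma_\ast^S,E_\ast^S)$-partial algebra on the pointed set $(\{\ast,x_1,\dots,x_k\},\ast)$ and identify the left columns $v_1,\dots,v_m$ and right column $w$ of $\N$ with elements of $P^n$. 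I would then check that an admissible column-expansion step of the algorithm of~\cite{HJ2022}---adjoining the right column of a row-wise interpretation of a sub-matrix of some $\M\in S$ whose left columns, but not its right column, are already present---is precisely an application, wherever it is defined in $P^n$, of the operation $p^\M$ to already-present columns; consequently the terminal expanded family of left columns is the closed sub-partial-algebra of $P^n$ generated by $v_1,\dots,v_m$, and the correctness of the algorithm gives that~\ref{theorem implication pointed lex} holds exactly when $w$ lies in it, i.e. exactly when $w=q^{P^n}(v_1,\dots,v_m)$ for some $m$-ary term $q\in\T_\ast^S(\{y_1,\dots,y_m\})$; read componentwise, this equality asserts $q(x_{i1},\dots,x_{im})=^\e x_{i\,m+1}$ in $P$, hence---$P$ being free---in every $(\Sigma_\ast^S,E_\ast^S)$-partial algebra, which is~\ref{theorem implication pointed term}. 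Conversely, given such a $q$, an $\Mc_\strong$-relation $r\colon R\rightarrowtail X^n$ (which is then closed) and an $\N$-interpretation of type $\Part_\ast^S(Z,X)$ whose left columns factor through $r$, evaluating at each $z\in|Z|$ and using that $q$ makes $X$ a $(\Sigma_\ast^\N,E_\ast^\N)$-partial algebra identifies the value at $z$ of the right column with $q^{X^n}$ applied to the values at $z$ of the left columns, all of which lie in $R$; by the observation this value lies in $R$, so the right column factors set-theoretically through $R$, and since $r$ is a closed monomorphism this factorisation is a homomorphism, yielding~\ref{theorem implication pointed PartS}.

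The step I expect to be the genuine obstacle is the first half of the previous paragraph: turning the informal description of an ``admissible expansion step'' into a precise two-way dictionary with ``applying a partial operation $p^\M$ of $P^n$ wherever it is defined'', which requires determining exactly on which tuples the partial operation $p^{\M,P}$ of the free algebra is defined and checking that the algorithm's stopping condition matches closure of the generated sub-partial-algebra. A more routine nuisance is the size reduction in~\ref{theorem implication pointed PartS}$\Rightarrow$\ref{theorem implication pointed general class of monos}. Finally, the non-pointed equivalences~\ref{theorem implication lex}--\ref{theorem implication term}, under $S\subseteq\matr$, $\N\in\matr$ and $m>0$, follow by the same scheme, using the non-pointed versions of Proposition~\ref{proposition PartS S-closed strong relations}, Proposition~\ref{proposition closed and strong monos PartS} (whose closed-implies-strong clause requires $S$ non-trivial, which is why the trivial case is again set aside---there, every condition holds, with witness $q=y_1$) and Theorem~\ref{theorem embedding theorem matrix set}, together with the algorithm of~\cite{HJJ2022}; here the free $(\Sigma^S,E^S)$-partial algebra on $\{x_1,\dots,x_k\}$ has underlying set $\{x_1,\dots,x_k\}$, which is non-empty since $k\geqslant1$.
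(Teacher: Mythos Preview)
Your proposal is correct and follows the same cycle of implications as the paper, resting on the same two ingredients (the embedding theorem for \ref{theorem implication pointed PartS}$\Rightarrow$\ref{theorem implication pointed general class of monos} and the algorithm of~\cite{HJ2022} for \ref{theorem implication pointed lex}$\Rightarrow$\ref{theorem implication pointed term}). Your execution differs only cosmetically: you reduce to small~$\CC$ where the paper instead extracts the partial-algebra structure on the relevant hom-sets directly; you phrase \ref{theorem implication pointed lex}$\Leftrightarrow$\ref{theorem implication pointed term} via the closed sub-partial-algebra of $P^n$ generated by the left columns of~$\N$, whereas the paper works syntactically by assigning a term $q_c$ to each column added by the algorithm; and you prove \ref{theorem implication pointed term}$\Rightarrow$\ref{theorem implication pointed PartS} by a direct elementwise computation, whereas the paper routes through the induced limit-preserving forgetful functor $\Part_\ast^S\to\Part_\ast^\N$ and invokes Proposition~\ref{proposition PartM M-closed strong relations} for~$\Part_\ast^\N$.
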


\begin{proof}
We first consider the general case where $S\subseteq\matr_\ast$ and $\N\in\matr_\ast$. To fix dimensions, we suppose that $\N\in\matr_\ast(n,m,k)$ for integers $n>0$ and $m,k\geqslant 0$. The implications \ref{theorem implication pointed general class of monos}$\Rightarrow$\ref{theorem implication pointed lex} and \ref{theorem implication pointed general class of monos}$\Rightarrow$\ref{theorem implication pointed strong} are obvious since the classes $\Mc_\all$ and $\Mc_\strong$ in any finitely complete category are closed under composition, stable under pullbacks and contain all regular monomorphisms. The implication \ref{theorem implication pointed strong}$\Rightarrow$\ref{theorem implication pointed essentially algebraic} is trivial since essentially algebraic categories are complete. The implication \ref{theorem implication pointed essentially algebraic}$\Rightarrow$\ref{theorem implication pointed PartS} follows immediately from Proposition~\ref{proposition PartS S-closed strong relations} and the fact that $\Part_\ast^S$ is an essentially algebraic pointed category. To prove the implication \ref{theorem implication pointed PartS}$\Rightarrow$\ref{theorem implication pointed general class of monos}, let us suppose that $\Part_\ast^S$ has $\N$-closed $\Mc_\strong$-relations and let us consider a finitely complete pointed category $\CC$ and a class of monomorphisms $\Mc$ in $\CC$ as in~\ref{theorem implication pointed general class of monos}. We suppose that $\CC$ has $S$-closed $\Mc$-relations and we must show it has $\N$-closed $\Mc$-relations. Let $r\colon R\rightarrowtail X^n$ be a relation in~$\Mc$, $Z$ be an object of $\CC$ and
$$\left[\begin{array}{ccc|c} g_{11} & \dots & g_{1m} & h_1 \\ \vdots & & \vdots & \vdots \\ g_{n1} & \dots & g_{nm} & h_{n} \end{array}\right]$$
be an interpretation of $\N$ of type $\CC(Z,X)$. We suppose that, for each $j\in\{1,\dots,m\}$, the morphism $(g_{1j},\dots,g_{nj})\colon Z\to X^{n}$ factors through $r$ and we must show that $(h_1,\dots,h_{n})\colon Z\to X^{n}$ also factors through~$r$. If $\CC$ is a small category, by the Embedding Theorem~\ref{theorem embedding theorem matrix set}, there exists a fully faithful embedding $\varphi\colon \CC\hookrightarrow (\Part_\ast^S)^{\CC^{\op}}$ which preserves and reflects finite limits and such that, for every monomorphism $f\in\Mc$ and every object $A\in\CC$, $\varphi(f)_A$ is a closed monomorphism in $\Part_\ast^S$. By Proposition~\ref{proposition closed and strong monos PartS}, $\varphi(r)_A$ is a strong relation for every object $A\in\CC$. Since $\Part_\ast^S$ is supposed to have $\N$-closed $\Mc_\strong$-relations, this means that $\varphi((h_1,\dots,h_{n}))_A$ factors through $\varphi(r)_A$ for every $A\in\CC$. Therefore, the pullback of $\varphi(r)$ along $\varphi((h_1,\dots,h_{n}))$ is an isomorphism since each of its component is. Thus, since $\varphi$ is fully faithful and preserves finite limits, the pullback of $r$ along $(h_1,\dots,h_{n})$ is $\CC$ is an isomorphism, proving that $(h_1,\dots,h_{n})$ factors through~$r$. If $\CC$ is not small, a similar argument can still be applied. Indeed, looking at the proof in~\cite{Jacqmin2019} of Theorem~\ref{theorem embedding theorem}, we can still construct $(\Sigma_\ast^S,E_\ast^S)$-partial algebraic structures on $\CC(Z,X)$, $\CC(Z,X^n)$ and $\CC(Z,R)$. This can be done in such a way that $\CC(Z,X^n)$ is isomorphic as $(\Sigma_\ast^S,E_\ast^S)$-partial algebra to the $n$-fold power of $\CC(Z,X)$ via the canonical isomorphism $\CC(Z,X^n)\cong\CC(Z,X)^n$ and such that composition with $r$ gives a closed monomorphism $\CC(Z,R)\rightarrowtail\CC(Z,X^n)$. We can thus see $\CC(Z,R)$ as a closed sub-partial algebra of $\CC(Z,X)^n$ containing $(g_1,\dots,g_{n})\in\CC(Z,X)^n$ if and only if the morphism $(g_1,\dots,g_{n})\colon Z\to X^n$ factors through~$r$. By Proposition~\ref{proposition closed and strong monos PartS}, this thus defines a strong $n$-ary relation in $\Part_\ast^S$. Therefore this relation is $\N$-closed by assumption. Considering the free $(\Sigma_\ast^S,E_\ast^S)$-partial algebra $\F_\ast^S(\{\ast,x\})$ on the one element set (notice that $\{\ast,x\}$ is the free pointed set on the one element set~$\{x\}$), we get an interpretation
$$\left[\begin{array}{ccc|c} g'_{11} & \dots & g'_{1m} & h'_1 \\ \vdots & & \vdots & \vdots \\ g'_{n1} & \dots & g'_{nm} & h'_{n} \end{array}\right]$$
of $\N$ of type $\Part_\ast^S(\F_\ast^S(\{\ast,x\}),\CC(Z,X))$ where $g'_{ij}(x)=g_{ij}$ and $h'_i(x)=h_i$ for all $i\in\{1,\dots,n\}$ and $j\in\{1,\dots,m\}$. Since $(g_{1j},\dots,g_{nj})$ factors through $r$ for each $j\in\{1,\dots,m\}$, we know that the morphisms $(g'_{1j},\dots,g'_{nj})\colon\F_\ast^S(\{\ast,x\})\to\CC(Z,X)^n$ factor through $\CC(Z,R)$. We conclude from this that $(h'_1,\dots,h'_{n})$ also factors through $\CC(Z,R)$ meaning that $(h_1,\dots,h_{n})$ factors through $r$ as required. We have therefore already proved the implications \ref{theorem implication pointed strong}$\Leftrightarrow$\ref{theorem implication pointed essentially algebraic}$\Leftrightarrow$\ref{theorem implication pointed general class of monos}$\Leftrightarrow$\ref{theorem implication pointed PartS}$\Rightarrow$\ref{theorem implication pointed lex}.

Let us now prove \ref{theorem implication pointed lex}$\Rightarrow$\ref{theorem implication pointed term}. If $S$ is trivial, by the equivalence \ref{theorem trivial pointed M trivial}$\Leftrightarrow$\ref{theorem trivial pointed partial algebra one element} of Theorem~\ref{theorem trivial}, it suffices to choose $q$ to be the constant term~$0$. If $S$ is not trivial, we rely on the algorithm to decide $S\Rightarrow_{\lex_\ast}\N$ from~\cite{HJ2022} recalled in Section~\ref{section preliminaries}. We denote the entries of the matrix $\N$ as in
$$\N=\left[\begin{array}{ccc|c} x_{11} & \dots & x_{1m} & x_{1\,m+1} \\ \vdots & & \vdots & \vdots \\ x_{n1} & \dots & x_{nm} & x_{n\,m+1} \end{array}\right]$$
where the $x_{ij}$'s belong to $\{\ast,x_1,\dots,x_{k}\}$. To each left column $c$ of~$\N$ (the original ones and the ones added by the algorithm), we associate an $m$-ary term $q_c\in\T_\ast^S(\{y_1,\dots,y_{m}\})$ such that, for each $i\in\{1,\dots,n\}$, the equation
$$q_c(x_{i1},\dots,x_{im})=^\e c_i$$
holds in any $(\Sigma_\ast^S,E_\ast^S)$-partial algebra, where $c_i$ is the $i^{\textrm{th}}$ entry of $c$ and $\ast$ is interpreted as~$0$. For the $j^{\textrm{th}}$ original left column $c^j$ of~$\N$ (for $j\in\{1,\dots,m\}$), we simply let $q_{c^j}$ be~$y_j$. For the first added column $c^{j+1}$ of~$\ast$'s, we take $q_{c^{j+1}}$ to be the term~$0$. Now, suppose at some point of the algorithm, we have defined $q_c$ for all the left columns of the current $\N$ and we are adding the right column $c^{B,m'+1}$ of a row-wise interpretation $B$ of type $(\{\ast,x_1,\dots,x_{k}\},\dots,\{\ast,x_1,\dots,x_{k}\})$ of a matrix $\M '\in\matr_\ast(n,m',k')$ whose rows are rows of a common matrix $\M\in S\cap\matr_\ast(n',m',k')$ (for integers $n'>0$, $m',k'\geqslant 0$) and such that the left columns $c^{B,j}$ of~$B$ (for each $j\in\{1,\dots,m'\}$), but not its right column $c^{B,m'+1}$, can be found among the left columns of~$\N$. We then define
$$q_{c^{B,m'+1}}=p^\M(q_{c^{B,1}},\dots,q_{c^{B,m'}}).$$
For $i\in\{1,\dots,n\}$, the $i^{\textrm{th}}$ row of $B$ is of the form
$$\left[\begin{array}{ccc|c} f(x'_{i'1}) & \dots & f(x'_{i'm'}) & f(x'_{i'\,m'+1}) \end{array}\right]$$
for a pointed function $f\colon\{\ast,x_1,\dots,x_{k'}\}\to\{\ast,x_1,\dots,x_{k}\}$ and an $i'\in\{1,\dots,n'\}$ and where
$$\left[\begin{array}{ccc|c} x'_{i'1} & \dots & x'_{i'm'} & x'_{i'\,m'+1} \end{array}\right]$$
is the $i'^{\textrm{th}}$ row of~$\M$. Thus, in any $(\Sigma_\ast^S,E_\ast^S)$-partial algebra, the equations
\begin{align*}
c^{B,m'+1}_{i} &=^\e f(x'_{i'\,m'+1})\\
&=^\e p^\M(f(x'_{i'1}),\dots,f(x'_{i'm'}))\\
&=^\e p^\M(c^{B,1}_{i},\dots,c^{B,m'}_{i})\\
&=^\e p^\M(q_{c^{B,1}}(x_{i1},\dots,x_{im}),\dots,q_{c^{B,m'}}(x_{i1},\dots,x_{im}))\\
&=^\e q_{c^{B,m'+1}}(x_{i1},\dots,x_{im})
\end{align*}
hold, completing the construction of the~$q_c$'s. But this immediately proves \ref{theorem implication pointed lex}$\Rightarrow$\ref{theorem implication pointed term} since, if $S\Rightarrow_{\lex_\ast}\N$ holds, the algorithm says that the right column of $\N$ has been added by the algorithm to its left columns.

To conclude the first list of equivalences, it remains to show \ref{theorem implication pointed term}$\Rightarrow$\ref{theorem implication pointed PartS}. Supposing~\ref{theorem implication pointed term}, we get a forgetful functor $U\colon\Part_\ast^S\to\Part_\ast^\N$. In view of the construction of small limits in categories of partial algebras, it is routine to show that $U$ preserves them. In order to prove~\ref{theorem implication pointed PartS}, let us consider a strong monomorphism $r\colon R\rightarrowtail X^{n}$ in $\Part_\ast^S$, a $(\Sigma_\ast^S,E_\ast^S)$-partial algebra $Z$ and an interpretation
$$\left[\begin{array}{ccc|c} g_{11} & \dots & g_{1m} & h_1 \\ \vdots & & \vdots & \vdots \\ g_{n1} & \dots & g_{nm} & h_{n} \end{array}\right]$$
of $\N$ of type $\Part_\ast^S(Z,X)$. Let us suppose that for each $j\in\{1,\dots,m\}$, the morphism $(g_{1j},\dots,g_{nj})\colon Z\to X^{n}$ factors through~$r$. Since by Proposition~\ref{proposition PartM M-closed strong relations}, $\Part_\ast^\N$ has $\N$-closed $\Mc_\strong$-relations, applying the functor~$U$, we get that
$$(U(h_{1}),\dots,U(h_{n}))\colon U(Z)\to U(X)^{n}$$
(which is (isomorphic to) $U((h_{1},\dots,h_{n}))\colon U(Z)\to U(X^{n})$) factors through~$U(r)$. Therefore, the pullback $r'$ of $r$ along $(h_{1},\dots,h_{n})$ is sent by $U$ to an isomorphism, and so it is bijective. So $r'$ is an epimorphism which, being a pullback of the strong monomorphism~$r$, is also a strong monomorphism. It is thus an isomorphism, which shows that $(h_{1},\dots,h_{n})$ factors through $r$ as desired.

Let us now consider the case where $S\subseteq\matr$, $\N\in\matr$ and $m>0$. The equivalence \ref{theorem implication pointed lex}$\Leftrightarrow$\ref{theorem implication lex} has been proved in~\cite{HJ2022} using the assumption $m>0$. The other equivalences can be proved analogously as in the pointed case above. For the proof of \ref{theorem implication PartM}$\Rightarrow$\ref{theorem implication general class of monos}, we separate it in two cases. It is analogous to the one of \ref{theorem implication pointed PartS}$\Rightarrow$\ref{theorem implication pointed general class of monos} if $S$ is not trivial (to be able to apply Proposition~\ref{proposition closed and strong monos PartS}). If $S$ is trivial, \ref{theorem implication general class of monos} is always true since $m>0$ (see Theorem~\ref{theorem trivial} and Remark~\ref{remark empty matrices}). For the implication \ref{theorem implication lex}$\Rightarrow$\ref{theorem implication term}, it is analogous to \ref{theorem implication pointed lex}$\Rightarrow$\ref{theorem implication pointed term} if $S$ is not trivial. If $S$ is trivial, since $m>0$ and in view of the equivalence \ref{theorem trivial pointed M trivial}$\Leftrightarrow$\ref{theorem trivial partial algebras max one element} of Theorem~\ref{theorem trivial}, one can choose $q$ to be the term~$y_1$.
\end{proof}

\begin{remark}\label{remark m'=0 in theorem implication}
Let us now say a word on the case where $S\subseteq\matr$, $\N\in\matr$ and $m=0$ in the above theorem. Using Theorem~\ref{theorem trivial} and Remark~\ref{remark empty matrices}, the statements \ref{theorem implication pointed lex}, \ref{theorem implication pointed strong}, \ref{theorem implication pointed essentially algebraic}, \ref{theorem implication pointed general class of monos}, \ref{theorem implication pointed PartS}, \ref{theorem implication pointed term}, \ref{theorem implication strong}, \ref{theorem implication essentially algebraic} and \ref{theorem implication PartM} of Theorem~\ref{theorem implication} are in that case all equivalent to the condition that $S$ is trivial. Besides, the statements \ref{theorem implication lex}, \ref{theorem implication general class of monos} and \ref{theorem implication term} of Theorem~\ref{theorem implication} are in that case equivalent to the condition that $S$ contains a matrix with no left columns.
\end{remark}

\begin{openquestion}
We leave open the question whether the statements of Theorem~\ref{theorem implication} are further equivalent to $S\Rightarrow_{\essalg_\ast}\N$ (in the general case) and to $S\Rightarrow_{\essalg}\N$ (in the case $S\subseteq\matr$, $\N\in\matr$ and $m>0$) with the obvious meaning of these notations. The fact that we have been unable yet to prove or disprove this reduction is closely linked with the fact that we do not know yet if the property of being Mal'tsev (or having $\M$-closed relations for a general matrix~$\M$) is stable under the pro-completion of a finitely complete (pointed) category, see~\cite{JJ2021}.
\end{openquestion}

As a consequence of Theorem~\ref{theorem implication}, one can now write proofs of implications of matrix properties in another way. In~\cite{HJ2022,HJJ2022}, lex-tableaux and $\text{lex}_\ast$-tableaux were used to represent such proofs. For instance, if $\Ari$ and $\Maj$ are the arithmetical and majority matrices from Section~\ref{section preliminaries}, a proof of the implication $\Ari\Rightarrow_{\lex}\Maj$ can been displayed as the following lex-tableau:
$$\begin{array}{ccc|c||ccc|c} 
x_1 & x_2 & x_2 & x_1 & x_1 & x_1 & x_2 & x_1 \\
x_2 & x_2 & x_1 & x_1 & x_1 & x_2 & x_1 & x_1 \\
x_1 & x_2 & x_1 & x_1 & x_2 & x_1 & x_1 & x_1 \\
\hline
x_2 & x_1 & x_1 & x_2 & x_2 &     &     &     \\
x_1 & x_1 & x_2 & x_2 & x_2 &     &     &     \\
x_1 & x_2 & x_1 & x_1 & x_1 &     &     &     \\
\hline
x_1 & x_2 & x_2 & x_1 & x_1 &     &     &     \\
x_2 & x_2 & x_1 & x_1 & x_1 &     &     &     \\
x_1 & x_1 & x_1 & x_1 & x_1 &     &     &     \\
\end{array}$$
The first block on the left represents the matrix $\Ari$, while the first block on the right represents the matrix $\Maj$. Below each horizontal line, a step of the algorithm to decide $\Ari\Rightarrow_{\lex}\Maj$ is displayed. So, for the second block, a row-wise interpretation of a matrix whose rows are rows of $\Ari$ (the matrix $\Ari$ itself in this case) is drawn, where each left column belongs to the left columns of $\Maj$ but not its right column. On the right, this right column is added to the extended $\Maj$. In the third block, again a row-wise interpretation of $\Ari$ is displayed on the left and its right column is added to the extended $\Maj$ on the right. Since this is the right column of $\Maj$, this concludes the proof of $\Ari\Rightarrow_{\lex}\Maj$.

One can transform this proof into a ternary term $q\in\T^\Ari(\{y_1,y_2,y_3\})$ proving the implication $\Ari\Rightarrow_{\lex}\Maj$ via the equivalence \ref{theorem implication lex}$\Leftrightarrow$\ref{theorem implication term} of Theorem~\ref{theorem implication}. To extract this term $q$ from the above lex-tableau, one follows the proof of \ref{theorem implication lex}$\Rightarrow$\ref{theorem implication term}, analogous to the proof of \ref{theorem implication pointed lex}$\Rightarrow$\ref{theorem implication pointed term}. To the first three left columns of $\Maj$, one associates the variables $y_1,y_2,y_3$ respectively. To the first added column
$$\left[\begin{array}{c} x_2 \\ x_2 \\ x_1 \end{array}\right]$$
one associates the term $p(y_3,y_1,y_2)$ where $p=p^\Ari$ is the term from $\Ari$, i.e., partial algebras in $\Part^\Ari$ satisfy the equations
\begin{equation}\label{equation pixley}
\begin{cases}p(x_1,x_2,x_2)=^\e x_1\\ p(x_2,x_2,x_1)=^\e x_1\\p(x_1,x_2,x_1)=^\e x_1.\end{cases}
\end{equation}
Finally, the term $q$, corresponding to the second added column is
$$q(y_1,y_2,y_3)=p(y_2,p(y_3,y_1,y_2),y_3).$$
At each step, the term is obtained applying $p$ to the terms corresponding to the left columns of the row-wise interpretation. To prove $\Ari\Rightarrow_{\lex}\Maj$ it is actually enough to check that, from the equations~(\ref{equation pixley}), one has the equations
$$q(x_1,x_1,x_2)=^\e p(x_1,p(x_2,x_1,x_1),x_2) =^\e p(x_1,x_2,x_2) =^\e x_1,$$
$$q(x_1,x_2,x_1)=^\e p(x_2,p(x_1,x_1,x_2),x_1) =^\e p(x_2,x_2,x_1) =^\e x_1$$
and
$$q(x_2,x_1,x_1)=^\e p(x_1,p(x_1,x_2,x_1),x_1) =^\e p(x_1,x_1,x_1) =^\e x_1$$
corresponding to the matrix $\Maj$. 

Since both rows of $\Mal$ appear in $\Ari$, it is straightforward to prove $\Ari\Rightarrow_\lex\Mal$ using the above term proofs. As shown in~\cite{HJJ2022}, one also has $\{\Maj,\Mal\}\Rightarrow_\lex\Ari$. In the language of terms, this can be done by considering
$$q(x,y,z)=p^\Mal(x,p^\Maj(z,x,y),z)$$
where $p^\Mal$ and $p^\Maj$ are terms satisfying the existence equations from $\Mal$ and $\Maj$ respectively. This term $q$ indeed satisfies the existence equations from $\Ari$ since
$$q(x,y,y)=^\e p^\Mal(x,p^\Maj(y,x,y),y)=^\e p^\Mal(x,y,y)=^\e x$$
$$q(y,y,x)=^\e p^\Mal(y,p^\Maj(x,y,y),x)=^\e p^\Mal(y,y,x)=^\e x$$
$$q(x,y,x)=^\e p^\Mal(x,p^\Maj(x,x,y),x)=^\e p^\Mal(x,x,x)=^\e x.$$

We have thus re-proved that an arithmetical category is the same thing as a Mal'tsev majority category. By the equivalence \ref{theorem implication strong}$\Leftrightarrow$\ref{theorem implication term} of Theorem~\ref{theorem implication}, this also shows that a finitely complete category is \emph{weakly arithmetical} (i.e., has $\Ari$-closed $\Mc_\strong$-relations) if and only if it is \emph{weakly Mal'tsev} (i.e., has $\Mal$-closed $\Mc_\strong$-relations) and \emph{weakly majority} (i.e., has $\Maj$-closed $\Mc_\strong$-relations).

To give an example in the pointed case, let us mention that in~\cite{HJ2022} we have shown that the matrix $\StrUni$ is equivalent to the matrix
$$\StrUni'=\left[\begin{array}{ccc|c}
x_1  & x_1  & \ast & x_1 \\
\ast & \ast & x_1  & x_1 \\
x_1  & \ast & x_1  & \ast
\end{array}\right]$$
in the sense that $\StrUni\Rightarrow_{\lex_\ast}\StrUni'$ and $\StrUni'\Rightarrow_{\lex_\ast}\StrUni$. Let us re-prove it here using (partial) terms. The term $p(x,y,z)$ corresponding to $\StrUni$ is associated to the equations
\begin{equation}\label{equation StrUni}
\begin{cases}p(x,0,0)=^\e x\\ p(y,y,x)=^\e x\end{cases}
\end{equation}
while the term $q(x,y,z)$ corresponding to $\StrUni'$ is associated to the equations
\begin{equation}\label{equation StrUni 2}
\begin{cases}q(x,x,0)=^\e x\\ q(0,0,x)=^\e x \\ q(x,0,x)=^\e 0.\end{cases}
\end{equation}
From a term $p(x,y,z)$ satisfying the equations~(\ref{equation StrUni}), we can construct the term
$$q(x,y,z)=p(z,p(x,y,0),y)$$
satisfying the equations~(\ref{equation StrUni 2}). Indeed,
$$\begin{cases}q(x,x,0)=^\e p(0,p(x,x,0),x)=^\e p(0,0,x)=^\e x\\
q(0,0,x) =^\e p(x,p(0,0,0),0)=^\e p(x,0,0)=^\e x\\
q(x,0,x)=^\e p(x,p(x,0,0),0)=^\e p(x,x,0)=^\e 0\end{cases}$$
already proving $\StrUni\Rightarrow_{\lex_\ast}\StrUni'$. For the other implication, from a term $q(x,y,z)$ satisfying the equations~(\ref{equation StrUni 2}), we can construct the term
$$p(x,y,z)=q(q(y,0,x),q(y,0,x),z)$$
satisfying the equations~(\ref{equation StrUni}). Indeed,
$$\begin{cases}p(x,0,0)=^\e q(q(0,0,x),q(0,0,x),0)=^\e q(x,x,0)=^\e x\\
p(y,y,x)=^\e q(q(y,0,y),q(y,0,y),x)=^\e q(0,0,x)=^\e x\end{cases}$$
proving $\StrUni'\Rightarrow_{\lex_\ast}\StrUni$.

\begin{remark}
Let us make clear that in the above proofs, \emph{partial} terms are considered. It was shown in~\cite{HJJ2022} that matrix implications are context sensitive. In particular, given matrices $\M,\N\in\matr$, if one proves that any variety with $\M$-closed relations has $\N$-closed relations, this does \emph{not} imply that $\M\Rightarrow_\lex\N$ in general. An example of this phenomenon has already been shown in the Introduction. Let us show here an example in the non-pointed context. We consider the matrices
$$\Cube^{\Delta^*}_3=\left[\begin{array}{ccccc|c}
x_1 & x_1 & x_1 & x_2 & x_2 & x_1 \\
x_1 & x_2 & x_2 & x_1 & x_1 & x_1 \\
x_2 & x_1 & x_2 & x_1 & x_2 & x_1
\end{array}\right]$$
and
$$\Edge_3=\left[\begin{array}{cccc|c}
x_2 & x_2 & x_1 & x_1 & x_1 \\
x_2 & x_1 & x_2 & x_1 & x_1 \\
x_1 & x_1 & x_1 & x_2 & x_1
\end{array}\right].$$
It has been shown in~\cite{HJJ2022} (using a computer implementation of the algorithm, see Figure~2 in~\cite{HJJ2022}) that $\Edge_3\Rightarrow_\lex\Cube^{\Delta^*}_3$ but $\Cube^{\Delta^*}_3\nRightarrow_\lex\Edge_3$ (i.e., the implication $\Cube^{\Delta^*}_3\Rightarrow_\lex\Edge_3$ does \emph{not} hold). However, it was shown in~\cite{BIMMVW2010} that varieties with $\Cube^{\Delta^*}_3$-closed relations also have $\Edge_3$-closed relations. According to Theorem~\ref{theorem characterization varieties with M-closed relations}, a variety $\VV$ has $\Cube^{\Delta^*}_3$-closed relations if and only if its theory has a $5$-ary term $p$ satisfying the equations
$$\begin{cases}p(x_1,x_1,x_1,x_2,x_2)=x_1\\
p(x_1,x_2,x_2,x_1,x_1)=x_1\\
p(x_2,x_1,x_2,x_1,x_2)=x_1.\end{cases}$$
Such a term has been called a \emph{$\Delta^*$-special cube term} in~\cite{BIMMVW2010}, explaining the notation $\Cube^{\Delta^*}_3$. Analogously, a variety $\VV$ has $\Edge_3$-closed relations if and only if its theory has a $4$-ary term $q$ satisfying the equations
$$\begin{cases}q(x_2,x_2,x_1,x_1)=x_1\\
q(x_2,x_1,x_2,x_1)=x_1\\
q(x_1,x_1,x_1,x_2)=x_1.\end{cases}$$
Such a term has been called a \emph{$3$-edge term} in~\cite{BIMMVW2010}, justifying the notation $\Edge_3$. Although $\Cube^{\Delta^*}_3\nRightarrow_\lex\Edge_3$, it was shown in~\cite{BIMMVW2010} that one can construct a $3$-edge term $q$ from a $\Delta^*$-special cube term $p$ as follows:
$$q(x,y,z,w)=p(p(y,z,z,w,w),z,p(x,z,z,z,z),w,p(x,w,z,w,z)).$$
One can indeed compute
\begin{align*}
q(x_2,x_2,x_1,x_1) &= p(p(x_2,x_1,x_1,x_1,x_1),x_1,p(x_2,x_1,x_1,x_1,x_1),x_1,p(x_2,x_1,x_1,x_1,x_1))\\
&= x_1,
\end{align*}
\begin{align*}
q(x_2,x_1,x_2,x_1) &= p(p(x_1,x_2,x_2,x_1,x_1),x_2,p(x_2,x_2,x_2,x_2,x_2),x_1,p(x_2,x_1,x_2,x_1,x_2))\\
&= p(x_1,x_2,x_2,x_1,x_1)\\
&= x_1
\end{align*}
and
\begin{align*}
q(x_1,x_1,x_1,x_2) &= p(p(x_1,x_1,x_1,x_2,x_2),x_1,p(x_1,x_1,x_1,x_1,x_1),x_2,p(x_1,x_2,x_1,x_2,x_1))\\
&= p(x_1,x_1,x_1,x_2,x_2)\\
&= x_1.
\end{align*}
However, this proof does \emph{not} extend to the partial context since in that case $p(x_2,x_1,x_1,x_1,x_1)$ in the first equation, and so $q(x_2,x_2,x_1,x_1)$, are not defined in general.
\end{remark}

Another consequence of Theorem~\ref{theorem implication} is that one can completely describe the posets of collections of categories of the form $\mclex^\strong\{\M\}$ or $\mclex_\ast^\strong\{\M\}$ for matrices $\M$ of relatively small dimensions. We consider these posets ordered by inclusion. These posets are almost isomorphic to the ones where we consider $\Mc_\all$ instead of~$\Mc_\strong$, the only difference concerns trivial matrices (see Remarks~\ref{remark empty matrices} and~\ref{remark m'=0 in theorem implication}). One can thus use a computer implementation of the algorithms recalled in Section~\ref{section preliminaries} to describe these posets. Several of these posets have been displayed in~\cite{HJ2022,HJJ2022} for the case~$\Mc_\all$. For instance, derived from Figure~1 in~\cite{HJ2022}, here is the Hasse diagram of the poset of collections of categories of the form $\mclex_\ast^\strong\{\M\}$ where $\M$ runs among the matrices in $\matr_\ast(2,3,2)$.
$$\xymatrix{&\{\text{all finitely complete pointed categories}\}& \\ \{\text{weakly unital categories}\} \ar[ru] \hspace{-39pt} && \hspace{-39pt}\{\text{weakly subtractive categories}\} \ar[lu] \\ & \{\text{weakly strongly unital categories}\} \ar[lu] \ar[ru] & \\ & \{\text{weakly Mal'tsev pointed categories}\} \ar[u] & \\ & \{\text{categories equivalent to }\1\} \ar[u] &}$$
In that diagram, a weakly unital category (respectively a weakly subtractive category, a weakly strongly unital category or a weakly Mal'tsev pointed category) is a finitely complete pointed category with $\M$-closed $\Mc_\strong$-relations where $\M=\Uni,\Sub,\StrUni,\Mal$ respectively.

To conclude, let us mention that if one takes $S=\{[x_1|x_1]\}$ in Theorem~\ref{theorem implication}, one obtains the following description of anti-trivial matrix sets. Extending the terminology from~\cite{HJ2022,HJJ2022}, we say that a matrix set $S\subseteq\matr_\ast$ is \emph{anti-trivial} if all finitely complete pointed categories have $S$-closed relations.

\begin{corollary}\label{corollary anti-trivial}
For a matrix set $S\subseteq\matr_\ast$, the following conditions are equivalent:
\begin{enumerate}[label=(\arabic*), ref=(\arabic*)]
\item\label{corollary anti-trivial pointed lex S} $S$ is anti-trivial, i.e., all finitely complete pointed categories have $S$-closed relations;
\item\label{corollary anti-trivial pointed lex} all matrices $\M$ in $S$ are anti-trivial in the sense of~\cite{HJ2022}, i.e., all finitely complete pointed categories have $\M$-closed relations;
\item\label{corollary anti-trivial pointed strong} all finitely complete pointed categories have $S$-closed $\Mc_\strong$-relations;
\item\label{corollary anti-trivial pointed essentially algebraic} all essentially algebraic pointed categories have $S$-closed $\Mc_\strong$-relations;
\item\label{corollary anti-trivial pointed general class of monos} for every finitely complete pointed category $\CC$ and every class $\Mc$ of monomorphisms in $\CC$ stable under pullbacks, $\CC$ has $S$-closed $\Mc$-relations;
\item\label{corollary anti-trivial pointed Set} $\Set_\ast$ has $S$-closed relations;
\item\label{corollary anti-trivial pointed right column} for all matrices $\M\in S$, all the entries of the right column of $\M$ are $\ast$’s or this right column of $\M$ can be found among its left columns.
\end{enumerate}
Moreover, if $S\subseteq\matr$, these are further equivalent to the following conditions:
\begin{enumerate}[resume, label=(\arabic*), ref=(\arabic*)]
\item\label{corollary anti-trivial lex S} all finitely complete categories have $S$-closed relations;
\item\label{corollary anti-trivial lex} all matrices $\M$ in $S$ are anti-trivial in the sense of~\cite{HJJ2022}, i.e., all finitely complete categories have $\M$-closed relations;
\item\label{corollary anti-trivial strong} all finitely complete categories have $S$-closed $\Mc_\strong$-relations;
\item\label{corollary anti-trivial essentially algebraic} all essentially algebraic categories have $S$-closed $\Mc_\strong$-relations;
\item\label{corollary anti-trivial general class of monos} for every finitely complete category $\CC$ and every class $\Mc$ of monomorphisms in $\CC$ stable under pullbacks, $\CC$ has $S$-closed $\Mc$-relations;
\item\label{corollary anti-trivial Set} $\Set$ has $S$-closed relations;
\item\label{corollary anti-trivial right column} for all matrices $\M\in S$, the right column of $\M$ can be found among its left columns.
\end{enumerate}
\end{corollary}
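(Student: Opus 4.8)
The plan is to deduce Corollary~\ref{corollary anti-trivial} from Theorem~\ref{theorem implication} by taking the left-hand matrix set to be $\{[x_1|x_1]\}$ and letting the right-hand matrix $\N$ of Theorem~\ref{theorem implication} range over the members of~$S$. The key observation is that \emph{every} finitely complete (pointed) category has $[x_1|x_1]$-closed relations, because the right column of $[x_1|x_1]$ already occurs among its left columns; hence, for each $\M\in\matr_\ast$, the assertion ``all finitely complete pointed categories have $\M$-closed relations'' coincides with the implication $\{[x_1|x_1]\}\Rightarrow_{\lex_\ast}\M$, and similarly in the non-pointed case. Consequently conditions~\ref{corollary anti-trivial pointed lex S} and~\ref{corollary anti-trivial pointed lex} are literally the same statement (they differ only by the order of two quantifiers), and both amount to: $\{[x_1|x_1]\}\Rightarrow_{\lex_\ast}\M$ for every $\M\in S$.

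Next I would run through the equivalent conditions of Theorem~\ref{theorem implication} for $\{[x_1|x_1]\}\Rightarrow_{\lex_\ast}\M$, translate each into a statement about the single matrix~$\M$, and take the conjunction over $\M\in S$. This uses two easy identifications. First, $\Part_\ast^{\{[x_1|x_1]\}}$ is equivalent to $\Set_\ast$ and $\Part^{\{[x_1|x_1]\}}$ is equivalent to $\Set$, since the only non-constant operation symbol, $p^{[x_1|x_1]}$, is forced by its defining equation to be the total identity and thus carries no structure. Second, every monomorphism in $\Set_\ast$ (respectively $\Set$) is closed, the operation symbols being $0$ and a total unary symbol, so by Proposition~\ref{proposition closed and strong monos PartS} (applicable since $\{[x_1|x_1]\}$ is not trivial) every monomorphism there is strong. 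With these, condition~\ref{theorem implication pointed PartS} of Theorem~\ref{theorem implication} (``$\Part_\ast^{\{[x_1|x_1]\}}$ has $\M$-closed $\Mc_\strong$-relations'') becomes ``$\Set_\ast$ has $\M$-closed relations'', which conjoined over~$S$ is condition~\ref{corollary anti-trivial pointed Set}; conditions~\ref{theorem implication pointed strong} and~\ref{theorem implication pointed essentially algebraic} of Theorem~\ref{theorem implication} become conditions~\ref{corollary anti-trivial pointed strong} and~\ref{corollary anti-trivial pointed essentially algebraic} (again using that every finitely complete, respectively essentially algebraic, pointed category trivially has $[x_1|x_1]$-closed $\Mc_\strong$-relations); condition~\ref{theorem implication pointed lex} of Theorem~\ref{theorem implication} is condition~\ref{corollary anti-trivial pointed lex}. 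For the term condition~\ref{theorem implication pointed term} of Theorem~\ref{theorem implication}, one checks that, up to induced partial operation, every term in $\T_\ast^{\{[x_1|x_1]\}}(\{y_1,\dots,y_m\})$ is either a projection $y_j$ or the constant~$0$; such a term turns every pointed set into a $(\Sigma_\ast^\M,E_\ast^\M)$-partial algebra precisely when the $j$-th left column of~$\M$ equals its right column, respectively when the right column of~$\M$ consists only of $\ast$'s. Conjoining over~$S$ gives exactly condition~\ref{corollary anti-trivial pointed right column}, which closes the loop among~\ref{corollary anti-trivial pointed lex S}, \ref{corollary anti-trivial pointed lex}, \ref{corollary anti-trivial pointed strong}, \ref{corollary anti-trivial pointed essentially algebraic}, \ref{corollary anti-trivial pointed Set} and~\ref{corollary anti-trivial pointed right column}.

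Condition~\ref{corollary anti-trivial pointed general class of monos} is not quite an instance of a clause of Theorem~\ref{theorem implication}, since there $\Mc$ is additionally required to be closed under composition and to contain the regular monomorphisms, so I would dispatch it by hand: it follows from~\ref{corollary anti-trivial pointed lex S} because a pullback-stable class of monomorphisms $\Mc$ is contained in $\Mc_\all$, whence $S$-closedness of all $\Mc_\all$-relations forces $S$-closedness of all $\Mc$-relations; and it implies~\ref{corollary anti-trivial pointed lex S} by taking $\Mc=\Mc_\all$. The non-pointed half of the corollary is handled the same way: for a matrix $\M\in\matr$ with $m>0$ left columns one applies the second list of equivalences of Theorem~\ref{theorem implication} to $\{[x_1|x_1]\}\Rightarrow_{\lex}\M$, using $\Part^{\{[x_1|x_1]\}}\simeq\Set$ and the above remark on strong monomorphisms of $\Set$, so that clauses~\ref{theorem implication lex}--\ref{theorem implication term} of Theorem~\ref{theorem implication} turn into conditions~\ref{corollary anti-trivial lex S}--\ref{corollary anti-trivial right column}; for a matrix $\M\in\matr$ with no left columns one invokes Remark~\ref{remark m'=0 in theorem implication} with left matrix set $\{[x_1|x_1]\}$ (which is neither trivial nor contains a matrix with no left columns), which shows that all the relevant statements fail for that~$\M$, consistently with $\M$ not being anti-trivial and with condition~\ref{corollary anti-trivial right column} and its pointed counterpart failing for it.

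The genuinely routine parts I would not spell out are the equivalences $\Part_\ast^{\{[x_1|x_1]\}}\simeq\Set_\ast$ and $\Part^{\{[x_1|x_1]\}}\simeq\Set$ and the normal form of terms over $\Sigma_\ast^{\{[x_1|x_1]\}}$. The one point that needs care — and which I expect to be the only real subtlety — is the treatment of the low-dimensional matrices: in the pointed case the first list of Theorem~\ref{theorem implication} already applies with no left columns, so a matrix in $\matr_\ast$ whose right column consists of $\ast$'s is truly anti-trivial even with an empty set of left columns, which is why condition~\ref{corollary anti-trivial pointed right column} is a disjunction whereas its non-pointed analogue~\ref{corollary anti-trivial right column} is not; and in the non-pointed case the matrices of~$S$ with no left columns must be separated off and dealt with via Remark~\ref{remark m'=0 in theorem implication}.
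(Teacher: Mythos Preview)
Your proposal is correct and follows essentially the same strategy as the paper: both reduce to applying Theorem~\ref{theorem implication} with left matrix set $\{[x_1\mid x_1]\}$, use the identifications $\Part_\ast^{\{[x_1|x_1]\}}\simeq\Set_\ast$ and $\Part^{\{[x_1|x_1]\}}\simeq\Set$ with all monomorphisms strong, and handle conditions~\ref{corollary anti-trivial pointed general class of monos} and~\ref{corollary anti-trivial general class of monos} separately because the hypotheses on~$\Mc$ are weaker there. The only cosmetic differences are that the paper cites~\cite{HJ2022,HJJ2022} for the equivalences involving~\ref{corollary anti-trivial pointed Set}, \ref{corollary anti-trivial pointed right column}, \ref{corollary anti-trivial Set}, \ref{corollary anti-trivial right column} rather than re-deriving them from the term condition as you do, and the paper obtains~\ref{corollary anti-trivial pointed general class of monos} from~\ref{corollary anti-trivial pointed right column} rather than from~\ref{corollary anti-trivial pointed lex S}; neither difference is substantive.
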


\begin{proof}
Let us first note that the equivalences \ref{corollary anti-trivial pointed lex}$\Leftrightarrow$\ref{corollary anti-trivial pointed Set}$\Leftrightarrow$\ref{corollary anti-trivial pointed right column} have been proved in~\cite{HJ2022}. Moreover, if $S\subseteq\matr$, \ref{corollary anti-trivial pointed lex}$\Leftrightarrow$\ref{corollary anti-trivial lex} has been shown also in~\cite{HJ2022} while \ref{corollary anti-trivial lex}$\Leftrightarrow$\ref{corollary anti-trivial Set}$\Leftrightarrow$\ref{corollary anti-trivial right column} were proved in~\cite{HJJ2022}. In addition, we remark that \ref{corollary anti-trivial pointed lex S}$\Leftrightarrow$\ref{corollary anti-trivial pointed lex} and \ref{corollary anti-trivial lex S}$\Leftrightarrow$\ref{corollary anti-trivial lex} if $S\subseteq\matr$ are obvious. In view of that, we can suppose without loss of generality that $S=\{\N\}$ for some $\N\in\matr_\ast$.

Now, note that if $\N\in\matr$ and $\N$ has no left columns, all statements in the theorem are obviously false (and thus equivalent), so that we can exclude this case without loss of generality. Now it suffices to apply Theorem~\ref{theorem implication} with $S'=\{[x_1|x_1]\}$; it is straightforward to show that each of its equivalent conditions becomes (equivalent to) the statements listed above. For statements~\ref{corollary anti-trivial pointed Set} (respectively~\ref{corollary anti-trivial Set}), it is easy to see that $\Part_\ast^{S'}$ (respectively $\Part^{S'}$) for that $S'$ is equivalent to $\Set_\ast$ (respectively to $\Set$) and any monomorphism is strong in those categories. Statements~\ref{corollary anti-trivial pointed general class of monos} and~\ref{corollary anti-trivial general class of monos} many seem stronger than those coming from Theorem~\ref{theorem implication} due to the conditions on~$\Mc$, but they can still be easily seen to be implied by~\ref{corollary anti-trivial pointed right column} and~\ref{corollary anti-trivial right column} respectively.
\end{proof}

%Bibliography

\vspace{30pt}
\begin{tabular}{rl}
Email: & mhoefnagel@sun.ac.za\\
& pierre-alain.jacqmin@uclouvain.be
\end{tabular}

\end{document}